\documentclass[12pt]{article}
\usepackage[margin=2cm]{geometry}
\usepackage{amsmath,amssymb,amstext,amsthm,bbm,xcolor,url}
\usepackage{hyperref}

\theoremstyle{plain}    
 \newtheorem{cor}{Corollary}[section] 
 \newtheorem{cl}[cor]{Claim} 
 \theoremstyle{plain}    
 \newtheorem{thm}{Theorem} 
 \theoremstyle{plain}    
 \newtheorem{lemma}[cor]{Lemma} 
 \theoremstyle{plain}    
 \newtheorem{prop}[cor]{Proposition} 
 \theoremstyle{definition}
 
 \theoremstyle{remark}
 \newtheorem*{rem*}{Remark}

\def\PSL{\operatorname{PSL}}
\def\SL{\operatorname{SL}}
\def\PGL{\operatorname{PGL}}

\def\u[#1]{\widetilde{C_2^{#1}}}

\def\stackbelow#1#2{\underset{\displaystyle\overset{\displaystyle\shortparallel}{#2}}{#1}}

\begin{document}

\author{Lior Bary-Soroker\textsuperscript{1}, Daniele Garzoni\textsuperscript{2}, Sasha Sodin\textsuperscript{3}}
\title{Irreducibility of the characteristic polynomials of random tridiagonal matrices}
\maketitle

\begin{abstract} Conditionally on the Riemann hypothesis for certain Dedekind zeta functions, we show that the characteristic polynomial of a class of random tridiagonal matrices of large dimension is irreducible, with probability exponentially close to one; moreover, its Galois group over the rational numbers is either the symmetric or the alternating group. This is the counterpart of the results of Breuillard--Varj\'u (for polynomials with independent coefficients), and with those of Eberhard and Ferber--Jain--Sah--Sawhney (for full random matrices). We also analyse a related class of random tridiagonal matrices for which the Galois group is much smaller.
\end{abstract}

\footnotetext[1]{Raymond and Beverly Sackler School of Mathematical Sciences, Tel Aviv University, Tel Aviv 6997801, Israel.
\mbox{E-mail}: barylior@tauex.tau.ac.il}
\footnotetext[2]{Department of Mathematics, University of Southern California, Los Angeles, CA 90089-2532, USA. 
\mbox{E-mail}: garzoni@usc.edu}
\footnotetext[3]{Einstein Institute of Mathematics, Edmond J. Safra Campus, Givat Ram, The Hebrew University of Jerusalem, Jerusalem, 9190401, Israel \& School of Mathematical Sciences, Queen Mary University of London, Mile End Road, London E1 4NS, UK.
\mbox{E-mail}: alexander.sodin@mail.huji.ac.il}

\section{Introduction}

\noindent Let $V_1, V_2, V_3, \cdots$ be independent, identically distributed, non-constant integer-valued random variables with
\begin{equation}\label{eq:tail}
\mathbb P\{ |V_1| \geq k \} \leq B \log^{-b} k~, \quad k \geq 3~,
\end{equation}
for some $B, b > 0$. 
Let $P_n(\lambda) = \det(\lambda \mathbbm{1} - H_n)$ be the characteristic polynomial of 
\begin{equation}\label{eq:defHn}
H_n = \left( \begin{array}{cccccc} 
V_1 & 1 & 0 & 0 & 0 & \cdots \\
1 & V_2 & 1 & 0 & 0 & \cdots \\
0 & 1 & V_3 & 1 & 0 & \cdots \\
\vdots&     &   \ddots & \ddots & \ddots &   \\ 
0 &&\cdots&1 &V_{n-1} & 1 \\
0 && \cdots &0 & 1 & V_n \\ 
\end{array} \right)~. \end{equation}

The main result we present is conditional on the extended Riemann Hypothesis for polynomials $Q \in \mathbb Z[\lambda]$:
\begin{equation}\label{eq:rh}
\begin{split}
&\text{all the non-trivial zeros of the Dedekind zeta functions of the number fields $\mathbb Q(a)$,}\\
&\text{where $a$ is a root of $Q$, lie on the critical line.}
\end{split}
\end{equation}

\begin{thm}\label{thm:1}
Assume  the extended Riemann Hypothesis (\ref{eq:rh}) for all the characteristic polynomials of matrices of the form \eqref{eq:defHn} with $V_j \in \mathbb Z$. There exist $C, c > 0$ such that the following holds with probability $\geq 1 - C \exp(-c n)$: $P_n$ is irreducible over $\mathbb Q$, and, moreover, the Galois group $\operatorname{Gal}(P_n / \mathbb Q)$ is either the symmetric group $S_n$ or the alternating group $A_n$. 
\end{thm}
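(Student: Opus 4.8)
The starting point is the three-term recurrence for the leading principal minors $P_k$ of $\lambda I-H_n$: with $P_0=1$, $P_{-1}=0$ one has $P_k(\lambda)=(\lambda-V_k)P_{k-1}(\lambda)-P_{k-2}(\lambda)$ (the off-diagonal entries being $1$), equivalently $(P_k,P_{k-1})^{\top}=M_k\,(P_{k-1},P_{k-2})^{\top}$ with $M_k=\left(\begin{smallmatrix}\lambda-V_k&-1\\ 1&0\end{smallmatrix}\right)\in\SL_2(\mathbb Z[\lambda])$. Hence for every prime $q$ and every $a\in\mathbb F_q$ the residue $\bar P_n(a)$ is the $(1,1)$-entry of the random product $M_n(a)\cdots M_1(a)$ in $\SL_2(\mathbb F_q)$, and for distinct $a_1,\dots,a_j$ and all but finitely many $q$ the sequence $\big(M_k(a_1),\dots,M_k(a_j)\big)_{k\ge1}$ is i.i.d.\ in $\SL_2(\mathbb F_q)^j$ with support generating the whole group. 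Moreover $H_n$, being symmetric with non-vanishing sub- and super-diagonal, has simple eigenvalues, so $P_n$ is automatically squarefree and $G:=\operatorname{Gal}(P_n/\mathbb Q)$ is a well-defined subgroup of $S_n$ acting faithfully on the $n$ roots of $P_n$.

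On the group-theoretic side I would use that, by the classification of finite simple groups, a $4$-transitive subgroup of $S_n$ with $n$ large is $A_n$ or $S_n$; and $G$ is $4$-transitive exactly when, for $j=1,2,3,4$, it acts transitively on ordered $j$-tuples of distinct roots, i.e.\ when the finite-dimensional $\mathbb Q$-algebra $A_j$ obtained from $\mathbb Q[\lambda_1,\dots,\lambda_j]/\big(P_n(\lambda_1),\dots,P_n(\lambda_j)\big)$ by inverting $\prod_{i<k}(\lambda_i-\lambda_k)$ is a field (the case $j=1$ being the irreducibility of $P_n$). So it suffices to show that, with probability $\ge 1-Ce^{-cn}$, each $A_j$ ($j\le4$) is connected. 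The number of its connected components equals the number of $G$-orbits on ordered $j$-tuples of distinct roots, and by Chebotarev it equals $\lim_{Q\to\infty}\pi(Q)^{-1}\sum_{q\le Q}N_j(q)$, where $N_j(q):=\#\{(a_1,\dots,a_j)\in\mathbb F_q^j:\ a_i\ \text{distinct},\ \bar P_n(a_i)=0\ \forall i\}$. Under the assumed Riemann Hypothesis, applied to the Dedekind zeta functions of the components of $A_j$ (each generated by at most four roots of $P_n$, hence of degree $\le n^4$), this convergence is quantitative: $\sum_{q\le Q}N_j(q)=(\#\text{components of }A_j)\,\pi(Q)+O\big(\sqrt Q\,n^{O(1)}(\log|\operatorname{disc}(P_n)|+\log Q)\big)$, which is $(\#\text{components})\,\pi(Q)\,(1+o(1))$ once $Q$ exceeds a fixed power of $n\log|\operatorname{disc}(P_n)|$. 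Since $A_j$ has at least one component, it is enough to prove $\pi(Q)^{-1}\sum_{q\le Q}N_j(q)<3/2$ for a suitable $Q=e^{c_0n}$.

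Estimating this average is the probabilistic heart of the matter, and it rests on the random-walk/expansion analysis of the products $M_n(a)\cdots M_1(a)$. For fixed distinct $a_1,\dots,a_j\in\mathbb F_q$ this walk on $\SL_2(\mathbb F_q)^j$ should equidistribute, once $n$ is large compared with $\log q$, with an error $q^{O(j)}e^{-c_1n}$ — and, crucially, with a spectral gap uniform in $q$ — so that $\mathbb P_V\big(\bar P_n(a_i)=0\ \forall i\big)=q^{-j}\big(1+q^{O(j)}e^{-c_1n}\big)$; summing over the $q(q-1)\cdots(q-j+1)$ admissible tuples gives $\mathbb E_V N_j(q)=1+q^{O(1)}e^{-c_1n}$ for every $q$ outside the finite exceptional set and the (few) primes dividing $\operatorname{disc}(P_n)$. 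A Chinese-remainder version of the same equidistribution decouples two distinct moduli up to a comparable error, so for $Q=e^{c_0n}$ with $c_0$ small enough that $Q^{O(1)}e^{-c_1n}=o(1)$ one gets $\mathbb E_V\big[\pi(Q)^{-1}\sum_{q\le Q}N_j(q)\big]=1+o(1)$ and variance $O(\pi(Q)^{-1})=O(n\,e^{-c_0n})$, whence by Chebyshev $\mathbb P_V\big(\pi(Q)^{-1}\sum_{q\le Q}N_j(q)\ge3/2\big)\le e^{-c_2n}$. Hypothesis \eqref{eq:tail} enters only here, to guarantee that off an event of probability $\le e^{-cn}$ one has $\log|\operatorname{disc}(P_n)|\le e^{cn/b+o(n)}$ — small enough both for the effective Chebotarev error above to be negligible at $Q=e^{c_0n}$ and for $\{q\le Q:\ q\mid\operatorname{disc}(P_n)\}$ to be a negligible fraction of the primes up to $Q$; this forces $c$ to be chosen small in terms of $b$ and $c_0$. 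Intersecting over $j=1,\dots,4$: with probability $\ge1-Ce^{-cn}$ every average lies in $(1/2,3/2)$, hence (being the integer $\#\{\text{components of }A_j\}\ge1$) equals $1$, so $P_n$ is irreducible and $G$ is $4$-transitive, and therefore $G\in\{S_n,A_n\}$.

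The step I expect to be the main obstacle is the uniform-in-$q$ spectral gap for this family of walks: the step distribution on $\SL_2(\mathbb F_q)^j$ depends on $q$ through the evaluation points $a_i$ and on the only mildly controlled law of $V_1$, so one is outside the scope of a black-box Bourgain--Gamburd theorem and must run an expansion / $L^2$-flattening argument robust to both features, with a large-sieve input strong enough to reach all moduli up to $e^{c_0n}$ — this is presumably where \eqref{eq:tail} is really needed, over and above the discriminant bound. A second, more bookkeeping-type point is to keep the range of $q$ on which the walk equidistributes compatible with the range on which the Riemann Hypothesis delivers a sufficiently effective Chebotarev theorem, and to make sure that the Dedekind zeta functions actually used — those of the fields generated by up to four roots of $P_n$ — are indeed among the ones for which RH is assumed.
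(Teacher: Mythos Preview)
Your outline follows the same Breuillard--Varj\'u strategy as the paper: express $P_n$ via products of $\SL_2$ transfer matrices, use effective Chebotarev under RH to relate $G$-orbits on $k$-tuples of roots to averages of $\#(Z(P_n,p))_k$ over primes, prove equidistribution of the random walk on $\PSL_2(p)^k$ to control expectation and variance of these counts, and conclude $k$-transitivity via a Chebyshev bound. Two differences are worth noting. First, your remark that a real symmetric tridiagonal matrix with nonzero off-diagonal has simple spectrum, so $P_n$ is automatically squarefree, is correct and bypasses the paper's separate Lemmas~\ref{l:mult}/\ref{l:mult-upgr} ruling out proper powers --- a genuine simplification. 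Second, the paper uses $k=6$ rather than $k=4$; both work via classification, and neither is essential.

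On the step you flag as the obstacle --- the spectral gap uniform in $q$ --- you are right that a straight Bourgain--Gamburd application does not give uniformity over all primes, but the paper does \emph{not} run a bespoke flattening argument either. Instead it quotes Breuillard--Gamburd (uniform gap for $\PSL_2(p)$ over all $p$ outside a set of density $O(x^{-1/2})$, uniformly over generating sets) together with Golsefidy--Srinivas (to pass from $\PSL_2(p)$ to products $\prod_j\PSL_2(p_j)$), and handles the exceptional primes by the trivial bound $\#(Z(P_n,p))_k\le n^k$. So the resolution is black-box after all, and the tail hypothesis~\eqref{eq:tail} is used only through the height/discriminant bound, not in the expansion step. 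One ingredient you pass over too quickly is the algebraic verification that the step-support actually generates $\PSL_2(p)^k$ for distinct evaluation points (and that one needs a symmetric generating set of the form $S^3S^{-3}$, since $SS^{-1}$ only generates a cyclic group); the paper spends Section~\ref{s:alg} on this via the Dickson classification and Goursat's lemma. Finally, your worry about which zeta functions require RH is resolved by Breuillard--Varj\'u's estimate~\eqref{eq:bv}: only the Dedekind zeta functions of $\mathbb Q(a)$ for single roots $a$ enter, exactly as assumed in~\eqref{eq:rh}.
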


Here $\operatorname{Gal}(P_n/\mathbb Q)$ is the Galois group of the splitting field of $P_n$ over $\mathbb Q$
 endowed with the action on its roots. The exponent $c$ and the prefactor $C$ in the conclusion of the theorem  depend only on  the constants $B, b$ in (\ref{eq:tail}) and on the second largest probability mass
\begin{equation}\label{eq:atombd}\max \left\{ \min(\mathbb P(V_1 = v), \mathbb P(V_1 =   v')) \, : \, v, v' \in \mathbb Z~, \, v \neq v'\right\}~.\end{equation}

The estimate on the probability is sharp, up to the values of the constants, since with probability $(\mathbb P\{V_1 = v\}) ^{n}$ all $V_j \equiv v$ and $P_n$ is reducible for $n \geq 2$ (in fact, $P_n(x) = U_n((x-v)/2)$, where $U_n(\cos \theta) = \frac{\sin((n+1)\theta)}{\sin \theta}$ is the $n$-th Chebyshev polynomial of the second kind). Similarly to many of the models we discuss below, it would be interesting to rule out the possibility that $\operatorname{Gal}(P_n/\mathbb Q)= A_n$. We leave this as an interesting open problem.

\subsection{Motivation}
The motivation for this result comes from the belief that ``generic'' polynomials of high degree with integer coefficients are irreducible over $\mathbb Q$. In other words, if $\mathcal P_n$ is a sufficiently rich family of polynomials of degree $n$ which have  no explicit algebraic reason to be reducible, we expect that
\begin{equation} \label{eq:mainconj} \# \{ P \in \mathcal P_n~, \,\, \text{$P$ is reducible} \} / \# \mathcal P_n \to 0~, \quad n \to \infty~. \end{equation}
A stronger belief is 
\begin{equation} \label{eq:mainconj2} \# \{ P \in \mathcal P_n~, \,\, \operatorname{Gal}(P/\mathbb Q) \neq S_n \} / \# \mathcal P_n \to 0~, \quad n \to \infty~. \end{equation}

These vaguely stated beliefs have been extensively examined for polynomials with independent coefficients, i.e.\ 
\[ \mathcal P_n^{\text{ind}, A} = \left\{ P(\lambda) = \lambda^n + \sum_{j=0}^{n-1} a_j \lambda^{n-j}~, \quad a_j \in A \right\}~, \]
where $A \subset \mathbb Z$ is a set of integers of cardinality $2 \leq \#A < \infty$.

If $0\in A$, then with positive probability, $P(0)=0$, and $P$ is reducible. Thus assume that $0 \not\in A$ (an alternative is to consider $\mathcal P^{\text{ind},A} \cap \{P(0) \neq 0\}$). The first author and Kozma showed \cite{BSK} that if 
\begin{equation}\label{eq:assumbsk}\begin{split} 
&\text{there exist four primes $p$ such that each residue class modulo $p$} \\ 
&\text{contains the same number of elements of $A$}\end{split}\end{equation}
then 
\begin{equation} \label{eq:mainconj3} \# \{ P \in \mathcal P_n^{\text{ind},A}~, \,\, \operatorname{Gal}(P/\mathbb Q) \notin \{A_n, S_n\} \} / \# \mathcal P_n^{\text{ind},A} \to 0~, \quad n \to \infty~. \end{equation}
In a joint work with Koukoulopoulos \cite{BSKK}, they relaxed the condition (\ref{eq:assumbsk}) to include, for example, intervals $A=\{1,\ldots, L\}$ of any length $L\geq 35$. If we allow $L$ to grow arbitrarily slowly with the degree $n$, then with probability tending to $1$ the Galois group is $S_n$ \cite{BSG}.

Breuillard and Varj\'u proved (\ref{eq:mainconj3}) (and thus also (\ref{eq:mainconj})) for any fixed finite $A \subset \mathbb Z \setminus \{0\}$ of cardinality $\geq 2$, conditionally on the extended Riemann Hypothesis (\ref{eq:rh}) for all polynomials with coefficients in $A$.

\medskip\noindent We conclude the discussion of the Galois group of polynomials in $\mathcal P_n^{\text{ind},A}$, with the classical van der Waerden theorem \cite{Waerden} which addresses the typical behaviour of the Galois group when $n$ is fixed and $A$ is the interval $\{-L,\ldots, L\}$ with $L \to \infty$: 
\begin{equation}\label{eq:vdW}
    \# \{ P \in \mathcal P_n^{\text{ind},\{-L,\ldots, L\}}~, \,\, \operatorname{Gal}(P/\mathbb Q) \neq S_n \} \, / \,  \# \mathcal P_n^{\text{ind},\{-L,\ldots, L\}}  \to 0~, \qquad L \to \infty~.
\end{equation}
There  is a vast literature of obtaining power saving upper bounds on the left-hand side of (\ref{eq:vdW}): see particularly \cite{Knobloch,Gallagher,Zywina,Dietmann,ChowDietmann1,Andersonetal,ChowDietmann2}. Recently, Bhargava \cite{Bhargava} proved that 
\[ \# \{ P \in \mathcal P_n^{\text{ind},\{-L,\ldots, L\}}~, \,\, \operatorname{Gal}(P/\mathbb Q) \neq S_n \}\, / \, \# \mathcal P_n^{\text{ind},\{-L,\ldots, L\}} \leq C_n/L~, \]
which is  best possible, up to the value of $C_n$.

\medskip
\noindent
Another natural family of polynomials arises from the theory of random matrices. Classically, one considers an algebraic group \( G \) over $\mathbb Z$ -- e.g.\ \( G = \SL_n \) -- and investigates the irreducibility and Galois group of a typical element of \( G(\mathbb{Z}) \). 
One natural model fixes a finite set of generators \( S \) and samples elements uniformly from a ball of large radius \( L \) in the Cayley graph of $G(\mathbb{Z})$. In this model, irreducibility (with probability tending to one) remains an open problem.

Rivin~\cite{Rivin} proposed a more tractable modification of this model: instead of sampling uniformly from a ball, one considers the \( \ell \)-th step of a random walk on the Cayley graph.  For $G = \SL_n$ and $\operatorname{Sp}_{2n}$, he demonstrated \cite{Rivin, Rivin2} that the characteristic polynomial of the resulting random matrix is irreducible, and, in the case of $\SL_n$, has Galois group \( S_n \), with probability \( 1 - C\exp(-c\ell) \).

This result was subsequently generalized to other connected algebraic groups by Jouve, Kowalski, and Zywina~\cite{JouveKowalskiZywina}, and to more general algebraic groups by Lubotzky and Rosenzweig~\cite{LubotzkyRosenzweig}. (Both of these works consider the more general setting of Zariski dense finitely generated subgroups of $G$, rather than just arithmetic subgroups.) In these extensions, the structure of the Galois group depends on both the Weyl group associated with \( G \) and, in the non-connected case, the connected component in which the walk lies. For example, for $G = \operatorname{Sp}_{2n}$, the typical Galois group is equal to $C_2 \wr S_n$.

Another possibility is to select elements uniformly from the set of
\begin{equation}\label{eq:gn} \left\{ \text{$n \times n$ matrices with entries in A}\right\} \cap G~.   \end{equation}
If $G \subset \operatorname{GL}_n$ is a connected semisimple algebraic group and $A = \{-L, \cdots, L\}$, $L \to \infty$,  Gorodnik and Nevo  \cite{GorodnickNevo} showed that, as $L \to \infty$, the Galois group is equal to the Weyl group of $G$  for most of such matrices. In particular, if $G = \SL_n$, the Galois group is typically equal to $S_n$.  This conclusion also holds for $G = \operatorname{GL}_n$, as follows from Hilbert's irreducibility theorem.

\medskip
\noindent

The complementary limiting regime is to fix $A$ and let $n \to \infty$. In this case, a matrix $M$ chosen at random in (\ref{eq:gn}) is invertible with probability exponentially close to one (see Kahn--Koml\'os--Szemer\'edi \cite{KKS} for $A = \{-1,+1\}$ and Rudelson--Vershynin \cite{RV} for the general case, as well as for more general distributions of the matrix entries), therefore the basic question is to study the properties of typical representatives of 
\[  
    \mathcal P_n^{\text{RM}, A} = \left\{ \text{characteristic polynomial of an $n \times n$ matrix  with entries in $A$} \right\}~, 
\]
where $A \subset \mathbb Z$, $2 \leq \#A < \infty$. Babai  and Vu--Wood   conjectured  (see \cite{V, Eb}) that irreducibility  (\ref{eq:mainconj}) holds in this case. Eberhard showed that (\ref{eq:mainconj3}), and hence (\ref{eq:mainconj}), holds under the extended Riemann Hypothesis (\ref{eq:rh}) for any such $A$, and unconditionally under the assumption (\ref{eq:assumbsk}) on the set $A$. These results build on the methods of \cite{BV,BSK}, respectively.

A closely related family consists of the characteristic polynomials of random symmetric matrices:
\[  
    \mathcal P_n^{\text{SRM}, A} = \left\{ \text{characteristic polynomial of a symmetric $n \times n$ matrix  with elements in $A$} \right\}~. 
\]
Eberhard conjectured that this family satisfies (\ref{eq:mainconj}) (irreducibility) and (\ref{eq:mainconj2}) (large Galois group); the former was proved by Ferber, Jain, Sah and Sawhney \cite{FJSS} conditionally on (\ref{eq:rh}), while the latter, and even  (\ref{eq:mainconj3}), is still open.

The results of \cite{Eb} and \cite{FJSS} naturally lead to the question whether (\ref{eq:mainconj}) and (\ref{eq:mainconj2}) hold for random matrices depending on fewer random variables. We focus on the family $H_n$; for the case of Bernoulli distribution $\mathbb P(V_1 = 0) = \mathbb P(V_1 = 1) = 1/2$  it  is of cardinality $2^n$, similarly to  $\mathcal P_n^{\text{ind}, \{0,1\}}$. We chose to focus on this particular family, which is arguably the simplest one, but other random matrices with few non-zero diagonals can be considered as well (see further below).

We also remark that the spectral properties of $H_n$ (and its infinite-volume limit) have been extensively studied in mathematical and theoretical physics under the name of (one-dimensional) Anderson model (see e.g.\ the monograph of Bougerol--Lacroix \cite{BL}, as well as the more recent monograph Aizenman--Warzel \cite{AW} presenting a broader circle of questions in the spectral theory of random operators). There, the focus is on the asymptotic, rather than algebraic and arithmetic, properties of the eigevalues (and eigenvectors) -- however, as we see below, our arguments make use of ideas developed in those fields, after appropriate adjustments.

\subsection{Several corollaries} We mention a couple of corollaries of Theorem~\ref{thm:1}. Recently, Emmrich showed \cite{Em} that if the characteristic polynomial $P$ of a matrix $H$ (say, with integer elements) satisfies $\operatorname{Gal}(P / \mathbb Q) \geq A_n$, then all the minors of any matrix $U$ diagonalising $H$ are non-zero. 
This conclusion was first proved by Chebotarev (see \cite[p.~274]{O}) for the adjacency matrix of a cycle graph (for which of course the Galois group is abelian -- the eigenvalues lie in a cyclotomic field -- hence much smaller). 
Combining this result of Emmrich with our theorem, we obtain, that, conditionally on (\ref{eq:rh}), all the minors of (any)  matrix $U_n$ diagonalising our $H_n$ are non-zero with probability $\geq1-C\exp(-cn)$.

It was proved by  Bir\'o (see \cite{Fr}),  Goldstein--Guralnick--Isaacs \cite{GGI}, Meshulam (unpublished) and Tao \cite{T}  that any matrix  $U$ the minors of which do not vanish satisfies the following uncertainty principle: for any non-zero vector $v \in \mathbb C^n \setminus \{0\}$,
\begin{equation}\label{eq:tao-unc}
\# \operatorname{supp} v +  \# \operatorname{supp} Uv \geq n+1 \end{equation}
(this inequality is sharp, since for any $1 \leq k \leq n$ one can find a linear combination of the first $k$ columns of $U$ which has at least $k-1$ zero co\"ordinates). 
Consequently, the matrices $U_n$ diagonalising $H_n$ satisfy  (\ref{eq:tao-unc}) with probability $\geq 1 - C \exp(-cn)$.

These properties could be contrasted with the results from the theory of Anderson localisation, which assert that the eigenvectors (the columns of $U_n$, normalised to have unit norm) are exponentially peaked: with probability $ \geq 1 - C \exp(- cn)$ one can order the eigenpairs so that 
\[ |(U_n)_{jk}| \leq C n^{0.001}  \exp(-c |j - k|)~,  \quad 1 \leq j,k \leq n~.\]
According to this estimate, for each vector $v$ with $\# \operatorname{supp} v = 1$ there exists an interval $I_v \subset [1, n]$ of length $\leq \frac{0.002}{c} \log n$, i.e.\ much smaller than $n = \# \operatorname{supp} Uv$, which carries most of the $\ell_2$ norm of $U_nv$ (the norm of $U_nv$ restricted to the complement of $I_v$ is bounded by a negative power of $n$).

\subsection{Extensions}

Consider the following extension of the model (\ref{eq:defHn}):
 \begin{equation}\label{eq:defHn+}
\tilde H_n = \left( \begin{array}{cccccc} 
V_1 & W_1 & 0 & 0 & 0 & \cdots \\
W_1 & V_2 & W_2 & 0 & 0 & \cdots \\
0 & W_2 & V_3 &W_3 & 0 & \cdots \\
\vdots&     &   \ddots & \ddots & \ddots &   \\ 
0 &&\cdots&W_{n-2} &V_{n-1} & W_{n-1}\\
0 && \cdots &0 & W_{n-1} & V_n \\ 
\end{array} \right)~, \end{equation}
where  $V_1, \cdots, V_n, W_1, \cdots, W_{n-1}$ are jointly independent, $V_j$ are sampled from one distribution on $\mathbb Z$, and $W_j$ from another one.

If the distribution of $V_1$ is non-degenerate, and both $V_1$ and $W_1$ satisfy a tail estimate of the form (\ref{eq:tail}), one can establish a conclusion similar to that of Theorem~\ref{thm:1}.

The situation is more complicated if $V_1\equiv a$ is constant. As we point out in Section \ref{s:dyson}, it is no loss to assume $a=0$, but here we state the results for general $a$. We start from the following observation:
\begin{cl}\label{cl:d-1} If $V_1 \equiv a$, then the spectrum of $\tilde H_n$ of (\ref{eq:defHn+}) is symmetric about $a$; in particular, the Galois group of  the characteristic polynomial $P_n(\lambda) = \det (\tilde H_n - \lambda \mathbbm{1})$ is contained in the wreath product $C_2 \wr S_{\lfloor n/2\rfloor} = C_2^{\lfloor n/2\rfloor} \rtimes S_{\lfloor n/2\rfloor}$  (viewed as a subgroup of $S_{2\lfloor n/2\rfloor} \leq S_n$) of the cyclic group $C_2 = \{-1, 1\}$ of order $2$ and the symmetric group, and $P_n$ is reducible for any odd $n \geq 3$.
\end{cl}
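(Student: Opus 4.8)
The plan is to exploit the constancy of the diagonal by conjugating with a diagonal sign matrix. Set $M = \tilde H_n - a\mathbbm{1}$; this is a symmetric tridiagonal matrix with zero diagonal and off-diagonal entries $W_1,\dots,W_{n-1}$, and $P_n(\lambda) = Q(\lambda - a)$ where $Q(\mu) = \det(M - \mu\mathbbm{1})$. Let $D = \operatorname{diag}\bigl((-1)^j\bigr)_{j=1}^{n}$. Conjugation by a diagonal matrix leaves the diagonal of $M$ unchanged (hence still zero) and multiplies the $(i,i\pm 1)$ entry by $D_{ii}D_{i\pm1,i\pm1} = -1$, so $DMD^{-1} = -M$. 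Consequently
\begin{equation*}
Q(\mu) = \det(M - \mu\mathbbm{1}) = \det\!\bigl(D(M - \mu\mathbbm{1})D^{-1}\bigr) = \det(-M - \mu\mathbbm{1}) = (-1)^n\det(M + \mu\mathbbm{1}) = (-1)^n Q(-\mu)~,
\end{equation*}
so the spectrum of $M$ is invariant under $\mu \mapsto -\mu$; equivalently the spectrum of $\tilde H_n$ is symmetric about $a$. This is the first assertion.

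For odd $n$ the displayed identity gives $Q(0) = -Q(0)$, hence $Q(0) = P_n(a) = 0$; since the diagonal entries are integer-valued, $a \in \mathbb{Z}$ and the monic polynomial $\lambda - a$ divides $P_n$ in $\mathbb{Z}[\lambda]$. As $\deg P_n = n \geq 3$, this exhibits $P_n$ as the product of a degree-$1$ and a degree-$(n-1)$ factor, so $P_n$ is reducible. (More generally $Q(\mu) = (-1)^n Q(-\mu)$ forces $Q(\mu) = R(\mu^2)$ for $n$ even and $Q(\mu) = \mu R(\mu^2)$ for $n$ odd, with $R \in \mathbb{Z}[\mu]$ of degree $\lfloor n/2\rfloor$ up to sign, whence $P_n(\lambda) = (\lambda - a)^{\varepsilon}R\bigl((\lambda - a)^2\bigr)$ with $\varepsilon \in \{0,1\}$.)

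To bound the Galois group I would argue on the roots. Let $x_1,\dots,x_n$ be the roots of $P_n$ in its splitting field. By the symmetry just proved, $\{x_i\}$ is invariant under the involution $\iota\colon x \mapsto 2a - x$, which groups the roots into $\lfloor n/2\rfloor$ two-element orbits, leaving the rational point $a$ fixed when $n$ is odd. Every $g \in \operatorname{Gal}(P_n/\mathbb{Q})$ permutes $\{x_i\}$ and, because $2a \in \mathbb{Q}$, satisfies $g(2a - x) = 2a - g(x)$, i.e.\ $g$ commutes with $\iota$. Hence $\operatorname{Gal}(P_n/\mathbb{Q})$ is contained in the centraliser of $\iota$ in $S_n$, which is precisely $C_2 \wr S_{\lfloor n/2\rfloor}$ acting on the $2\lfloor n/2\rfloor$ non-fixed roots — the $C_2$ factors interchanging $x$ with $2a - x$, the $S_{\lfloor n/2\rfloor}$ permuting the pairs — together with the trivial action on the root $a$ in the odd case. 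Equivalently, one invokes the standard embedding $\operatorname{Gal}\bigl(R(t^2)/\mathbb{Q}\bigr) \hookrightarrow C_2 \wr S_{\deg R}$ after the rational substitution $t \mapsto \lambda - a$ (and, for odd $n$, after deleting the rational factor $\lambda - a$).

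The argument is short; the only point that needs care is the identification of the centraliser of $\iota$ with the wreath product and, in the odd case, the bookkeeping of the stray fixed root $a$ — which is exactly why the statement views $C_2 \wr S_{\lfloor n/2\rfloor}$ inside $S_{2\lfloor n/2\rfloor} \le S_n$ rather than inside $S_n$ directly. One should also remark that if $P_n$ is inseparable then it has a repeated root, hence is already reducible, and the Galois action on the set of distinct roots still centralises the induced reflection and so still lands in the same wreath product.
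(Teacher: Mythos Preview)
Your proof is correct and follows the same approach as the paper: the paper's entire argument is the one-line observation that conjugation by $\operatorname{diag}(-1,1,-1,1,\dots)$ sends $\tilde H_n$ to $2a\mathbbm{1}-\tilde H_n$, which is exactly your computation $DMD^{-1}=-M$. You have simply spelled out in detail the consequences (the factorisation $P_n(\lambda)=(\lambda-a)^{\varepsilon}R((\lambda-a)^2)$ and the identification of the centraliser of $\iota$ with the wreath product) that the paper leaves to the reader.
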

 In order to prove the claim, it suffices to note that conjugation by the diagonal matrix $\operatorname{diag}(-1, 1, -1, 1, \cdots)$ takes $\tilde H_n$ to $2a \mathbbm{1} - \tilde H_n$.

Next, we record two cases in which the Galois group is (much) smaller than $C_2 \wr S_{\lfloor n/2\rfloor}$ (the first statement follows from the recursion (\ref{eq:d-rec}) (see Section~\ref{s:dyson}); the second one follows from the block structure of $\tilde H_n$ in case some of the $W_j$ vanish).
\begin{cl} Consider the model (\ref{eq:defHn+}) with $V_1 \equiv a$.
\begin{enumerate}
\item If $|W_1| \equiv b > 0$, then $P_n(x) = b^n U_n((x-a)/(2b))$ is reducible for any $n \geq 2$, and, moreover, the Galois group of $P_n$ is abelian;
\item if  $\mathbb P (|W_1| = 0) = \tau >0$, then for any $1 \leq k \leq n-1$
\[ \mathbb P \{ \text{$P_n$ splits into $ \geq k+1$ irreducible factors} \} \geq 1 - \binom{n-1}{k-1} (1-\tau)^{n-k}\]
(in particular, the number of irreducible factors is linear in $n$, with probability exponentially close to one).
\end{enumerate}
\end{cl}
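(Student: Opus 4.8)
The plan is to identify $P_n$, up to an affine change of variable, with a Chebyshev polynomial of the second kind. Specializing the three-term recursion (\ref{eq:d-rec}) for the characteristic polynomials of $\tilde H_n$ to $V_j \equiv a$ — and noting that it involves the off-diagonal entries only through the products $W_j^2 \equiv b^2$, so that the signs of the $W_j$ are immaterial — one obtains $P_0 = 1$, $P_1(\lambda) = \lambda - a$, and $P_n(\lambda) = (\lambda - a) P_{n-1}(\lambda) - b^2 P_{n-2}(\lambda)$ for $n \geq 2$. Comparing with $U_0 = 1$, $U_1(x) = 2x$, $U_n(x) = 2x\, U_{n-1}(x) - U_{n-2}(x)$, a one-line induction gives $P_n(\lambda) = b^n U_n\big(\tfrac{\lambda - a}{2b}\big)$; hence, using $U_n(\cos\theta) = \sin((n+1)\theta)/\sin\theta$, the roots of $P_n$ are $\mu_k = a + 2b\cos\big(\tfrac{k\pi}{n+1}\big)$ for $k = 1, \dots, n$. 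Since $a, b \in \mathbb Z$ and $2\cos\big(\tfrac{k\pi}{n+1}\big) = \zeta^{k} + \zeta^{-k}$ with $\zeta = e^{\pi i/(n+1)}$ a primitive $2(n+1)$-th root of unity, every $\mu_k$ lies in the cyclotomic field $\mathbb Q(\zeta)$; thus the splitting field of $P_n$ is contained in $\mathbb Q(\zeta)$. As $\operatorname{Gal}(\mathbb Q(\zeta)/\mathbb Q)$ is abelian and $\operatorname{Gal}(P_n/\mathbb Q)$ is a quotient of it, $\operatorname{Gal}(P_n/\mathbb Q)$ is abelian.

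To see that $P_n$ is reducible for $n \geq 2$, it suffices to produce one root of $P_n$ whose degree over $\mathbb Q$ is strictly less than $\deg P_n = n$, since then its minimal polynomial is a proper divisor of $P_n$. I would take $\mu_1$: as $a, b \in \mathbb Z$ with $b \neq 0$, it generates the maximal real subfield $\mathbb Q(\zeta + \zeta^{-1})$ of $\mathbb Q(\zeta)$, which has degree $\varphi(2(n+1))/2$ over $\mathbb Q$. For every even integer $m \geq 2$ one has $\varphi(m) \leq m/2$ — writing $m = 2^{s} t$ with $t$ odd gives $\varphi(m) = 2^{s-1}\varphi(t) \leq 2^{s-1} t = m/2$ — so $[\mathbb Q(\mu_1):\mathbb Q] = \varphi(2(n+1))/2 \leq (n+1)/2 < n$ for $n \geq 2$, as needed. (For odd $n \geq 3$ this is also immediate from Claim~\ref{cl:d-1}, since then $\mu_{(n+1)/2} = a$ and $(\lambda - a) \mid P_n$.)

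\medskip
\noindent\textbf{Proof of the second statement.} The plan exploits the block structure of $\tilde H_n$. If $W_j = 0$ for some $1 \leq j \leq n-1$, then $\tilde H_n$ is block diagonal with respect to the partition $\{1, \dots, j\} \sqcup \{j+1, \dots, n\}$, the two diagonal blocks being tridiagonal matrices of the form (\ref{eq:defHn+}). Iterating: if the set $\{\, 1 \leq j \leq n-1 : W_j = 0 \,\}$ has cardinality $m$, then $\tilde H_n$ is block diagonal with $m+1$ tridiagonal blocks of sizes $\geq 1$ summing to $n$; consequently $P_n$ is a product of $m+1$ polynomials of positive degree, and hence $P_n$ splits into at least $m+1$ irreducible factors over $\mathbb Q$, counted with multiplicity. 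Therefore
\[ \mathbb P\{ \text{$P_n$ splits into $\geq k+1$ irreducible factors} \} \;\geq\; \mathbb P\{\, \#\{\, 1 \leq j \leq n-1 : W_j = 0 \,\} \geq k \,\}. \]
The complement of the event on the right-hand side is $\{\, \#\{\, 1 \leq j \leq n-1 : W_j \neq 0 \,\} \geq n-k \,\}$, and by a union bound over the $\binom{n-1}{n-k} = \binom{n-1}{k-1}$ subsets $S \subseteq \{1, \dots, n-1\}$ with $|S| = n-k$, together with the independence of the $W_j$ and $\mathbb P(W_1 \neq 0) = 1-\tau$,
\[ \mathbb P\{\, \#\{\, 1 \leq j \leq n-1 : W_j \neq 0 \,\} \geq n-k \,\} \;\leq\; \sum_{|S| = n-k} \mathbb P\{\, W_j \neq 0 \text{ for all } j \in S \,\} \;=\; \binom{n-1}{k-1}(1-\tau)^{n-k}. \]
Combining the last two displays gives the asserted inequality. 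Finally, choosing $k = \lfloor cn \rfloor$ with a sufficiently small $c = c(\tau) > 0$ and bounding $\binom{n-1}{k-1} \leq (en/k)^{k}$ shows that $\binom{n-1}{k-1}(1-\tau)^{n-k} \leq C\exp(-c' n)$ for suitable $C, c' > 0$, which yields the parenthetical statement.

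\medskip
\noindent\textbf{Expected main obstacle.} Neither part poses a real difficulty: the first is an explicit computation followed by the observation that cyclotomic extensions are abelian, while the second combines a block decomposition with a union bound over the zero pattern of $(W_1, \dots, W_{n-1})$. The only point requiring a little care is ruling out irreducibility in the first statement for \emph{all} $n \geq 2$, rather than only for odd $n$ (where $\lambda - a$ is a visible factor); the totient estimate $\varphi(2(n+1))/2 < n$ settles this uniformly.
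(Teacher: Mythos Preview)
Your proof is correct and fleshes out exactly the two ideas the paper indicates parenthetically: the recursion (\ref{eq:d-rec}) identifies $P_n$ with a rescaled Chebyshev polynomial (whence the roots lie in a real cyclotomic field, giving an abelian Galois group and reducibility via the totient bound), and the block structure of $\tilde H_n$ when some $W_j$ vanish, combined with a union bound on the number of nonzero $W_j$, gives the second part. The paper itself gives no more detail than this, so there is nothing further to compare.
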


 In Section~\ref{s:dyson} we prove the following theorem, which asserts that once these cases are excluded, the Galois group is close to $C_2 \wr S_{\lfloor n/2\rfloor}$.  For a positive integer $m$,  let

\begin{equation}\label{eq:def-Um}
\u[m] = \left\{ a \in C_2^m \, : \, \prod_{j=1}^m a_j = 1\right\}~.
\end{equation}
For $m\ge 2$, $\u[m] \rtimes A_m$ is the derived subgroup of $C_2\wr S_m = C_2^m \rtimes S_m$, and has index $4$ therein.

\begin{thm}\label{thm:2} Consider the characteristic polynomial $P_n(\lambda)$ of the model (\ref{eq:defHn+}) with $V_1 \equiv a$. If $|W_j|$ has a non-degenerate distribution supported on $\mathbb Z_{> 0}$ and 
\begin{equation}\label{eq:tail2}\mathbb P\{ |W_1| \geq k \} \leq B \log^{-b} k~, \quad k \geq 3 \end{equation}
for some $B, b > 0$, then, assuming the extended Riemann hypothesis (\ref{eq:rh}), 
\[ \mathbb P \left\{ \operatorname{Gal}(P_n / \mathbb Q) \geq  \u[{\lfloor n/2\rfloor}] \rtimes A_{\lfloor n/2\rfloor} \right\} \geq 1 - C \exp(-cn)~. \]
 In particular, for even $n$ the polynomial is irreducible with probability exponentially close to one; for odd $n$ it is a product of a linear term and an irreducible one.
\end{thm}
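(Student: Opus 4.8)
The plan is to follow the same Chebotarev-over-primes strategy that underlies Theorem~\ref{thm:1}, but now working inside the ambient group $G = C_2 \wr S_m$ with $m = \lfloor n/2 \rfloor$, which by Claim~\ref{cl:d-1} is known a priori to contain $\operatorname{Gal}(P_n/\mathbb Q)$. The subgroup $\u[m] \rtimes A_m$ is the derived subgroup of $G$ and has index $4$; correspondingly $G / (\u[m]\rtimes A_m) \cong C_2 \times C_2$, the two quotient maps being the sign character $\operatorname{sgn} \colon S_m \to C_2$ pulled back to $G$, and the ``total twist'' character $\chi \colon G \to C_2$ given by $\prod_j a_j$ on the $C_2^m$-part. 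So the first reduction is: it suffices to show that, with probability $1 - C\exp(-cn)$, (i) the image of $\operatorname{Gal}(P_n/\mathbb Q)$ in $G$ is not contained in any proper subgroup \emph{except possibly} one of the three index-$2$ subgroups $G_1, G_2, G_3$ lying above $\u[m]\rtimes A_m$, i.e.\ $\operatorname{Gal}$ surjects onto at least one of the two $C_2$-quotients' complements — concretely, that $\operatorname{Gal}$ contains $\u[m]\rtimes A_m$. Equivalently we must (a) rule out all proper subgroups of $G$ that do not contain $\u[m]\rtimes A_m$, via a large supply of Frobenius elements with prescribed cycle structure, and (b) accept (as in Theorem~\ref{thm:1}, where $A_n$ is not excluded) that the three index-$2$ obstructions may survive.

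For step (a) I would use the ``transfer to a polynomial over $\mathbb{F}_p$'' mechanism: reducing $\tilde H_n \bmod p$ for a prime $p$, the factorization type of $P_n \bmod p$ records the cycle type of a Frobenius element, and — crucially — the tridiagonal recursion $(\ref{eq:d-rec})$ for the characteristic polynomials of the leading principal submatrices lets one control this factorization by a transfer-operator / random-walk argument on $\operatorname{SL}_2(\mathbb{F}_p)$ (or its image), exactly the Anderson-model-style idea the authors advertise in the introduction. The outputs I would aim to manufacture, each with probability bounded below uniformly in $n$ over an appropriate set of primes: a Frobenius whose image in $S_m$ is an $m$-cycle or an $(m-1)$-cycle (this kills all intransitive and imprimitive subgroups of $S_m$, hence all of $G$ projecting into such, and forces the $S_m$-part to be $A_m$ or $S_m$ by Jordan's theorem); and a Frobenius realizing a single ``sign flip'' in the $C_2^m$ coordinates with a long cycle in $S_m$ — this is what distinguishes $\u[m]\rtimes A_m$ (where the product of the flips is constrained) from its proper subgroups and certifies that the full $\u[m]$ is hit once the $A_m$-part acts transitively. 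Here the model (\ref{eq:defHn+}) with $|W_j|$ genuinely random and unbounded is exactly what supplies enough independent randomness to make these events happen with uniformly positive probability; the hypotheses that $|W_1|$ is non-degenerate and that $V_1$ is \emph{constant} are both used — the former to get the $C_2^m$-direction moving, the latter because a random $V$ would instead put us in the situation of Theorem~\ref{thm:1}.

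The main obstacle, and where most of the work goes, is the group theory of $G = C_2 \wr S_m$: classifying the maximal subgroups that do \emph{not} contain $\u[m]\rtimes A_m$ and checking that each is avoided by at least one of the Frobenius cycle types we can produce. Beyond the imprimitive-in-$S_m$ and wrong-parity issues, one must handle the ``diagonal'' subgroups of $C_2^m \rtimes S_m$ (graphs of homomorphisms $S_m \to C_2$) and subgroups where the $C_2^m$-part is a proper $S_m$-submodule — the permutation module $\mathbb{F}_2^m$ has the all-ones vector and the sum-zero hyperplane $\u[m]$ as its only submodules when $m$ is odd, but an extra subtlety at $m$ even, so the statement is naturally cleanest for even $n$. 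The number-theoretic input (the effective Chebotarev density theorem under GRH, as in \cite{BV, Eb}) then converts ``for a positive proportion of primes $p \leq \exp(O(n))$, $P_n \bmod p$ has the desired factorization type with probability $\geq \delta$'' into ``$\operatorname{Gal}(P_n/\mathbb{Q}) \supseteq \u[m]\rtimes A_m$ with probability $\geq 1 - C\exp(-cn)$'', the same way Theorem~\ref{thm:1} is deduced; this part I would treat as essentially a black box, citing the earlier argument. The final sentence of the theorem is then immediate: $\u[m]\rtimes A_m$ acts transitively on the $2m$ roots when $n = 2m$, forcing irreducibility, while for $n = 2m+1$ the spectrum is symmetric about $a$ and $a$ itself is the ``extra'' eigenvalue (an $(m+1)$-st fixed point / the linear factor $x-a$), leaving an irreducible degree-$2m$ factor.
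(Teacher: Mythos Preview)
Your proposal has a genuine gap in the mechanism that produces the Frobenius elements. The transfer-matrix / random-walk machinery gives you control over the \emph{number of roots} of $P_n$ in $\mathbb{F}_p$ (via $P_n(\lambda) = (\Phi_n(\lambda))_{11}$ and equidistribution of $\Phi_n(\lambda)$ in $\PSL_2(p)$), not over the full factorisation type of $P_n \bmod p$. To exhibit a Frobenius whose $S_m$-image is an $m$-cycle you would need the degree-$m$ polynomial $R_n$ with $Q_n(\lambda)=R_n((\lambda-a)^2)$ to be irreducible modulo $p$, i.e.\ information about roots in $\mathbb{F}_{p^m}$, which the walk on $\PSL_2(\mathbb{F}_p)$ does not see. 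Your invocation of ``effective Chebotarev under GRH'' is also oriented the wrong way: Chebotarev gives densities of Frobenius classes \emph{once the Galois group is known}; the Breuillard--Varj\'u input (\ref{eq:bv}) that the paper actually uses goes in the useful direction by equating the prime-average of $\#(Z(P_n,p))_k$ with the number of Galois orbits on $k$-tuples --- but that statement is about orbit counts, not about realising prescribed cycle types.

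The paper's route avoids this difficulty altogether. It shows (via the $\PSL_2$ random walk, with $\tilde T(\lambda,w)$ normalised to lie in $\PSL_2(p)$ and new generation lemmas for $\lambda \neq 0,\pm\lambda'$) that the prime-averaged value of $\#(Z(P_n,p)\setminus\{0\})_k$ concentrates near $\mathcal{O}(k)$, the number of involutions in $S_k$, which is exactly the number of orbits of the full group $C_2 \wr S_m$ on $k$-tuples. Combined with (\ref{eq:bv}), this forces $\operatorname{Gal}(P_n/\mathbb{Q})$ to have \emph{the same} orbit count on $6$-tuples as $C_2\wr S_m$, namely $76$. The decisive group-theoretic step is then a single lemma: any subgroup of $C_2 \wr S_m$ ($m\geq 6$) with exactly $76$ orbits on $(\Omega)_6$ already contains $\u[m]\rtimes A_m$. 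The proof of that lemma is where your $\mathbb{F}_2^m$-submodule analysis belongs --- and it requires a genuine cohomological computation, $H^1(S_m,\mathbb{F}_2^m/\mathbb{F}_2)=H^1(A_m,\mathbb{F}_2^m/\mathbb{F}_2)=0$, to rule out the ``twisted diagonal'' complements, which your sketch gestures at but does not carry out. So the skeleton you outline (work inside $C_2\wr S_m$, analyse its subgroup lattice via the $\mathbb{F}_2^m$-module structure, feed in mod-$p$ information through (\ref{eq:bv})) is right, but the information extracted from primes should be orbit counts on $k$-tuples, not specific cycle types, and the subgroup elimination is done in one stroke via the $76$-orbit characterisation rather than by manufacturing individual elements.
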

We conjecture that, under the assumptions of the theorem, $\operatorname{Gal}(P_n / \mathbb Q)  = C_2 \wr S_{\lfloor n/2\rfloor}$ with probability exponentially close to one.

Finally, we mention that the atypical behaviour of the spectrum of (\ref{eq:defHn+}) with $V_j \equiv a$ around $a$ is  known in spectral theory as Dyson's spike (see Dyson \cite{Dyson} as well as the recent work of Kotowski and Vir\'ag \cite{KV} and references therein). Both the spectral-theoretic anomaly (the vanishing of the Lyapunov exponent at $\lambda = a$) and the algebraic one (the small Galois group of $P_n$) are consequences of the symmetry of the spectrum about the origin (which also implies that the transfer matrices at $\lambda = a$ lie in the triangular subgroup, a soluble subgroup of $\SL_2$).

\paragraph{Acknowledgement} We are grateful to Alex Gamburd and Elon Lindenstrauss   for referring us to the works \cite{BG,GS} which led to an improvement in the estimates on the probability in the main results. We also thank Alex Lubotzky and Peter Sarnak for their helpful comments.

The first author is supported by the Israel Science Foundation (grant no.\ 366/23). The third author is partially supported by the Philip Leverhulme Prize (PLP-2020-064); he is also grateful to  Gady Kozma and the Morris Belkin Visiting Professor programme for the hospitality at the Weizmann Institute of Science, where parts of this work have been performed.

\section{Overview of the proof}

The proof is based on the general strategy of Breuillard and Varj\'u \cite{BV}. Let $P \in \mathbb Z[x]$ be a polynomial of degree $\leq n$.  Let $Z(P)$ be the set of (distinct) complex zeros of $P$. Also, for a set $\Omega$ and a natural number $k$, we denote by $(\Omega)_k$ the set of ordered $k$-tuples of distinct elements of $\Omega$.

In the sequel, we shall abuse notation and denote by the same letter $P$ the reduction of $P$ modulo a prime $p$. Let $Z(P, p)$ be the set of (distinct) roots of $P$ in $\mathbb F_p$.  The following estimate of \cite{BV}, relying on the Riemann hypothesis (\ref{eq:rh}), relates the number of orbits of the action of the Galois group   $\operatorname{Gal}(P / \mathbb Q)$  on the set  $(Z(P))_k$   to a weighted average of the cardinalities of $(Z(P, p))_k$ as $p$ varies over primes lying in a long interval: 
\begin{equation}\label{eq:bv}
\left| \frac{1}{x} \sum_{x < p \leq 2x} \log p \times \#  (Z(P, p))_k - \#\left( (Z(P))_k / \operatorname{Gal}(P / \mathbb Q)\right) \right| \leq C k n^k   \log (H(P) n) \, \frac{\log^2 x}{x^{1/2}}~.
\end{equation}

Here  $H(P)$ is the height of $P$ (the maximal absolute value of the coefficients), and the sum is over the primes in the interval $(x, 2x]$. The inequality (\ref{eq:bv}) follows from the combination of \cite[Proposition~9]{BV} and \cite[Lemma 21]{BV}. Note that we work with falling factorials, whereas \cite{BV} with powers; this is mainly a matter of notational convenience.

To analyse the average of $\# (Z(P, p))_k$ for polynomials with random independent coefficients, Breuillard and Varj\'u study the random walk on the affine group modulo $p$. The properties of our random polynomial $P_n$ are closely connected to a random walk on $\mathrm{SL}_2(p)$. Formally, denote
\[ T(\lambda) = \left( \begin{array}{cc} \lambda & - 1 \\ 1 & 0 \end{array}\right),  \quad \Phi_n(\lambda) = T(\lambda - V_n) T (\lambda-V_{n-1}) \cdots T (\lambda - V_1)~, \]
with the convention $\Phi_0(\lambda) = \mathbbm{1}$.
We have the following deterministic identity (well known in the spectral theory of one-dimensional Schr\"odinger operators, where $\Phi_n(\lambda)$ are known as transfer matrices).

\begin{lemma} \label{lem:computecharpolyfromtransfer}
$P_n(\lambda) = (\Phi_n(\lambda))_{11}$ (deterministically for any matrix of the form (\ref{eq:defHn})).
\end{lemma}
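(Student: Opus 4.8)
\emph{Proof proposal.} The plan is to show that both $(\Phi_n(\lambda))_{11}$ and $P_n(\lambda)$ satisfy the same three-term recurrence in $n$, with the same initial data, and hence coincide. For $0 \le j \le n$ let $P_j(\lambda) = \det(\lambda \mathbbm{1} - H_j)$ be the characteristic polynomial of the top-left $j \times j$ block of $H_n$ (so that $P_n$ is the polynomial in the statement, $P_0(\lambda) \equiv 1$, and $P_1(\lambda) = \lambda - V_1$).

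First I would record the cofactor expansion of the characteristic polynomial. Expanding $\det(\lambda \mathbbm{1} - H_j)$ along its last row — whose only nonzero entries are the $(j,j-1)$ entry $-1$ and the $(j,j)$ entry $\lambda - V_j$ — and then expanding the resulting $(j-1)\times(j-1)$ minor along its last column, one obtains the standard recursion for leading principal minors of a tridiagonal matrix,
\[
P_j(\lambda) = (\lambda - V_j)\,P_{j-1}(\lambda) - P_{j-2}(\lambda)~, \qquad j \ge 2~,
\]
the coefficient $-1$ in front of $P_{j-2}$ being $-a_{j-1}c_{j-1}$ with $a_{j-1} = c_{j-1} = -1$ the two off-diagonal entries of $\lambda\mathbbm{1}-H_j$. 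It is convenient to extend this by setting $P_{-1}(\lambda) \equiv 0$, so that the recursion also holds at $j = 1$.

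Next I would track the column vector $\Phi_j(\lambda)\, e$, where $e = \bigl(\begin{smallmatrix}1\\0\end{smallmatrix}\bigr)$. Since $\Phi_0 = \mathbbm{1}$, we have $\Phi_0(\lambda)\,e = \bigl(\begin{smallmatrix}P_0(\lambda)\\P_{-1}(\lambda)\end{smallmatrix}\bigr)$. Assuming inductively that $\Phi_{j-1}(\lambda)\,e = \bigl(\begin{smallmatrix}P_{j-1}(\lambda)\\P_{j-2}(\lambda)\end{smallmatrix}\bigr)$, and using $\Phi_j(\lambda) = T(\lambda - V_j)\,\Phi_{j-1}(\lambda)$ together with the explicit form of $T$,
\[
\Phi_j(\lambda)\,e = \begin{pmatrix}\lambda - V_j & -1\\ 1 & 0\end{pmatrix}\begin{pmatrix}P_{j-1}(\lambda)\\ P_{j-2}(\lambda)\end{pmatrix} = \begin{pmatrix}(\lambda-V_j)P_{j-1}(\lambda) - P_{j-2}(\lambda)\\ P_{j-1}(\lambda)\end{pmatrix} = \begin{pmatrix}P_j(\lambda)\\ P_{j-1}(\lambda)\end{pmatrix}~,
\]
the last equality being the recursion established above. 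Taking $j = n$ and reading off the first coordinate yields $(\Phi_n(\lambda))_{11} = (\Phi_n(\lambda)\,e)_1 = P_n(\lambda)$, as claimed.

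The argument is entirely routine, and I do not expect a genuine obstacle; the only care needed is bookkeeping. The product defining $\Phi_n$ is ordered as $T(\lambda-V_n)\cdots T(\lambda-V_1)$, so in the induction a new factor is appended on the \emph{left}, mirroring the way the last row and column are peeled off in the cofactor expansion; and the $-1$ in the upper-right entry of $T$ is exactly what reproduces the $-P_{j-2}$ term (rather than $+P_{j-2}$) and makes $\det T = 1$. An alternative, slightly slicker phrasing would interpret $T(\lambda - V_j)$ as the transfer matrix acting on pairs of solutions $(P_{j-1},P_{j-2})$ of the difference equation, and invoke the classical transfer-matrix formalism directly; but the inductive comparison above is self-contained.
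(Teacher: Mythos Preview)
Your proof is correct and follows essentially the same route as the paper: establish the three-term recurrence $P_j=(\lambda-V_j)P_{j-1}-P_{j-2}$ by cofactor expansion (with the convention $P_{-1}\equiv 0$), then observe that this is exactly the relation $\bigl(\begin{smallmatrix}P_j\\P_{j-1}\end{smallmatrix}\bigr)=T(\lambda-V_j)\bigl(\begin{smallmatrix}P_{j-1}\\P_{j-2}\end{smallmatrix}\bigr)$ and iterate. Your write-up is more detailed than the paper's, but the argument is identical.
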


\begin{proof}
The identity is verified directly for $n=0$. For the induction step, observe that by row expansion
\[ P_n(\lambda) = (\lambda - V_n) P_{n-1}(\lambda) - P_{n-2}(\lambda)~, \quad n\geq 1~,\] 
with the convention $P_{-1} \equiv 0$, whence
\[ \binom{P_{n}(\lambda)}{P_{n-1}(\lambda)} = T (\lambda - V_n) \binom{P_{n-1}(\lambda)}{P_{n-2}(\lambda)}~. \qedhere\]
\end{proof}

\begin{cor}\label{cor:height} Assume the tail bound (\ref{eq:tail}). Then   
\[ \mathbb P \{ \log H(P_n) \geq t \} \leq 2^b B n^{1+b} t^{-b}~, \quad t \geq 2n~,
\]
and, in particular, for any $\epsilon > 0$
\begin{equation}\label{eq:height}\begin{split}
 \mathbb P \left\{\log H(P_n) \geq \exp(\epsilon n) \right\}  &\leq C_\epsilon \exp(-(b/2) \epsilon n) \\
 \mathbb P \left\{\log H(P_n) \geq \exp(n^\epsilon) \right\}  &\leq C_\epsilon \exp(-(b/2) n^\epsilon)~. \end{split}\end{equation}
\end{cor}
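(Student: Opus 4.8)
The plan is to bound $H(P_n)$ deterministically in terms of $|V_1|,\dots,|V_n|$ using the transfer-matrix identity of Lemma~\ref{lem:computecharpolyfromtransfer}, and then to turn the tail bound (\ref{eq:tail}) into a tail bound on $\log H(P_n)$ by a union bound. For $Q\in\mathbb R[\lambda]$ write $\|Q\|_1$ for the sum of the absolute values of its coefficients; since polynomial multiplication is convolution of coefficient sequences, $\|\cdot\|_1$ is submultiplicative, and $H(P_n)\le\|P_n\|_1$. By Lemma~\ref{lem:computecharpolyfromtransfer}, $P_n=(\Phi_n)_{11}$ with $\Phi_n=T(\lambda-V_n)\cdots T(\lambda-V_1)$. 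Let $M_j$ be the $2\times2$ nonnegative matrix whose $(i,k)$ entry is $\|(T(\lambda-V_j))_{ik}\|_1$, i.e.\ $M_j=\bigl(\begin{smallmatrix}1+|V_j|&1\\1&0\end{smallmatrix}\bigr)$. Expanding the matrix product entrywise and applying the triangle inequality together with submultiplicativity of $\|\cdot\|_1$, a one-line induction gives $\|(\Phi_n)_{ik}\|_1\le(M_n\cdots M_1)_{ik}$. The maximal row sum obeys $\|M_j\|_\infty=2+|V_j|$, and $\|\cdot\|_\infty$ is submultiplicative with $(A)_{11}\le\|A\|_\infty$ for nonnegative $A$; hence
\[ H(P_n)\le\|P_n\|_1\le(M_n\cdots M_1)_{11}\le\prod_{j=1}^n(2+|V_j|)~,\qquad\text{so}\qquad \log H(P_n)\le\sum_{j=1}^n\log(2+|V_j|)~. \]

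Now fix $t\ge2n$. On the event $\{\log H(P_n)\ge t\}$ the previous display forces $\log(2+|V_j|)\ge t/n$, i.e.\ $|V_j|\ge e^{t/n}-2$, for at least one $j$. Since $t/n\ge2$ we have $e^{t/n}-2\ge e^2-2>3$, so (\ref{eq:tail}) is applicable; moreover $\log(e^{t/n}-2)\ge t/(2n)$ --- writing $s=t/n\ge2$ this is the inequality $(e^{s/2}-2)(e^{s/2}+1)\ge0$, which holds because $e^{s/2}\ge e>2$. A union bound over $j\in\{1,\dots,n\}$ therefore gives
\[ \mathbb P\{\log H(P_n)\ge t\}\le n\,\mathbb P\{|V_1|\ge e^{t/n}-2\}\le nB\bigl(\log(e^{t/n}-2)\bigr)^{-b}\le nB\bigl(t/(2n)\bigr)^{-b}=2^bBn^{1+b}t^{-b}~, \]
which is the first claim of the corollary.

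The two estimates in (\ref{eq:height}) follow by substituting $t=\exp(\epsilon n)$ and $t=\exp(n^\epsilon)$. For every $n$ larger than some $n_0(\epsilon)$ one has $t\ge2n$, so the bound just proved applies, and $2^bBn^{1+b}\exp(-b\epsilon n)\le C_\epsilon\exp(-(b/2)\epsilon n)$ (resp.\ $2^bBn^{1+b}\exp(-bn^\epsilon)\le C_\epsilon\exp(-(b/2)n^\epsilon)$), since a fixed power of $n$ is eventually dominated by $\exp((b/2)\epsilon n)$ (resp.\ by $\exp((b/2)n^\epsilon)$); the finitely many $n\le n_0(\epsilon)$ are absorbed by enlarging $C_\epsilon$ until the right-hand side is $\ge1$.

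I do not expect any genuine difficulty here: the substance is the deterministic estimate, which the transfer-matrix lemma reduces to submultiplicativity of a matrix norm. The only mild care needed is in the constants in the second step --- in particular, guaranteeing that the hypothesis $k\ge3$ of (\ref{eq:tail}) holds, which is precisely the reason for the restriction $t\ge2n$ (any $t\ge(2+o(1))n$ would work), and the harmless bookkeeping absorbing the polynomial prefactor and the small-$n$ regime into $C_\epsilon$.
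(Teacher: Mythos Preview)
Your proof is correct and follows essentially the same approach as the paper: a deterministic bound on $\log H(P_n)$ via submultiplicativity of a norm on the transfer matrices, followed by a union bound using (\ref{eq:tail}). The only cosmetic difference is that the paper obtains the deterministic bound via the Cauchy estimate $H(P_n)\le\max_{|z|=1}\|\Phi_n(z)\|\le(2+\max_j|V_j|)^n$, whereas you use the $\ell_1$-norm of the coefficient sequences directly; both routes yield the same inequality $\log H(P_n)\le n\log(2+\max_j|V_j|)$ (your product bound being slightly sharper but immediately reduced to this), and the probabilistic bookkeeping is identical.
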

\begin{proof}
The tail bound  (\ref{eq:tail}) implies that
\[ \mathbb P \left\{ |V_j| \geq  \exp(t / (2n)) \right\}
\leq B (t/(2n))^{-b}~,  \quad t \geq 2n~,
 \]
whence 
\[ \mathbb P \left\{ \max_j |V_j| \geq  \exp(t/(2n))  \right\} \leq B n (t/(2n))^{-b} = 2^b B n^{1+b} t^{-b}~.\]
On the complementary event, we have by the Cauchy formula 
\[ \log H(P_n) \leq \max_{|z|=1} \log \| \Phi_n(z)\| \leq  
n \log(2 + \exp(t/(2n))) \leq t~. \qedhere\]
\end{proof}

As noted above, random walks in $\SL_2(p)$ arise naturally in our problem: for each $\lambda$ the reduction of the sequence $\Phi_n(\lambda)$ modulo a prime $p$ forms a random walk in $\SL_2(p)$. It will be convenient (though far from crucial) to work with the projection of this walk to $\PSL_2(p)$. Abusing notation, we denote the image of $\Phi_n(\lambda)$ in both $\SL_2(p)$ and $\PSL_2(p)$ by the same letters $\Phi_n(\lambda)$, i.e.\  denote by $\Phi_n(\lambda) \in \SL_2(p)$ or $\Phi_n(\lambda) \in \PSL_2(p)$, $\lambda \in \mathbb F_p$,   the image of a matrix $\Phi_n(\tilde \lambda)$, where $\tilde\lambda \in \mathbb Z$ is such that $\tilde \lambda \equiv \lambda \mod p$. Recall that (for $p\ge 3$)
\[ N_p \overset{\textrm{def}}{=} \# \PSL_2(p) = p(p^2-1)/2~.\]

\medskip
\noindent We first explain how to establish the conclusion of Theorem~\ref{thm:1} with a weaker, stretched-exponential bound on the probability, $1 - C \exp(-n^c)$ for an implicit constant $c>0$. This argument will use relatively classical results; the main non-elementary input will come from the work of Helfgott \cite{H}. Then we shall explain how to  improve the probability bound to exponential, as claimed in the theorem. This will require to use the classification of finite simple groups, as well as the works of Bourgain--Gamburd--Sarnak \cite{BGS}, Breuillard--Gamburd \cite{BG} and of Golsefidy--Srinivas \cite{GS}.

\subsection{The stretched exponential bound}
 In Section~\ref{s:mix}, we shall prove
\begin{prop}\label{prop:mix} There exists $C > 0$ (depending on the quantity (\ref{eq:atombd})) such that the following holds. Let $p_1 \geq \cdots \geq p_k \geq C$ be prime numbers and let $n \geq C k^7 \log^{C}p_1$. Then, for any $\lambda_1 \in \mathbb F_{p_1}, \ldots, \lambda_k \in \mathbb F_{p_k}$ such that $\lambda_i \neq \lambda_{j}$ whenever $p_i = p_j$ and $i \neq j$, we have 
\[
\begin{split} 
    d_k(n) &\overset{\textrm{def}}{=} \sum_{(g_1, \ldots, g_k) \in \PSL_2(p_1)\times \cdots \times \PSL_2(p_k)} \left| \mathbb P \left\{ \forall 1 \leq j \leq k \, : \, \Phi_n(\lambda_j) = g_j \right\} - \prod_{j=1}^k N_{p_j}^{-1} \right| 
    \leq \exp(-\frac{n} {C k^6 \log^{C}p_1} )~.
\end{split}
\] 
\end{prop}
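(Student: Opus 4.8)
The plan is to read $d_k(n)$ as an $\ell^1$ mixing rate for a random walk on a product of copies of $\PSL_2$ and then run the Bourgain--Gamburd expansion machine on that walk. Set $G:=\PSL_2(p_1)\times\cdots\times\PSL_2(p_k)$, let $u_G$ be its uniform probability measure, and let $\mu$ be the law of the $G$-valued random element $\bigl(T(\lambda_1-V_1)\bmod p_1,\dots,T(\lambda_k-V_1)\bmod p_k\bigr)$. Since $\Phi_n(\lambda)=T(\lambda-V_n)\cdots T(\lambda-V_1)$, the vector $(\Phi_n(\lambda_1),\dots,\Phi_n(\lambda_k))$ is the time-$n$ position of the random walk on $G$ with i.i.d.\ increments distributed according to $\mu$; as $\prod_j N_{p_j}^{-1}=|G|^{-1}$, this is exactly $d_k(n)=\|\mu^{*n}-u_G\|_{\ell^1(G)}$. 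Passing to the symmetric measure $\sigma:=\mu*\check\mu$ (with $\widehat\sigma(\rho)=\widehat\mu(\rho)\widehat\mu(\rho)^*\succeq0$, so that $\|\widehat\sigma(\rho)\|_{\mathrm{op}}=\|\widehat\mu(\rho)\|_{\mathrm{op}}^2$) we may moreover work with a symmetric step distribution.

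The argument splits into a soft Fourier bootstrap and a hard flattening estimate. For the bootstrap: \emph{if} at some time $\ell_0$ one has the $L^2$-flattening bound $\|\sigma^{*\ell_0}-u_G\|_{2}\le\tfrac14|G|^{-1/2}$, then Plancherel (Peter--Weyl for the finite group $G$) gives $\|\widehat\sigma(\rho)^{\ell_0}\|_{\mathrm{op}}\le\tfrac14$, hence $\|\widehat\mu(\rho)\|_{\mathrm{op}}\le 4^{-1/(2\ell_0)}$, for every non-trivial irreducible $\rho$; using $\|\widehat\mu(\rho)^n\|_{\mathrm{HS}}^2\le d_\rho\,\|\widehat\mu(\rho)^n\|_{\mathrm{op}}^2$ together with $\sum_\rho d_\rho^2=|G|$, Plancherel and Cauchy--Schwarz then give $d_k(n)\le|G|^{1/2}\,\|\mu^{*n}-u_G\|_2\le|G|^{1/2}2^{-n/\ell_0}$, which is $\le 2^{-n/(2\ell_0)}$ once $n\ge\ell_0\log_2|G|$, and since convolution contracts the $\ell^1$-distance to $u_G$ the bound persists for larger $n$. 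As $\log|G|\le 3k\log p_1$, it therefore suffices to prove the $L^2$-flattening of $\sigma$ at a time $\ell_0$ of size $k^{O(1)}(\log p_1)^{O(1)}$, with all constants depending only on the quantity \eqref{eq:atombd} and on the pair $v,v'$ realising its maximum.

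The flattening is the hard part, and this is where the non-elementary inputs enter. (i) \emph{Generation.} For $p>|v-v'|$ and any $\lambda\in\mathbb F_p$, the element $T(\lambda-v')^{-1}T(\lambda-v)$ is a lower unipotent with off-diagonal entry $v-v'\not\equiv0$, while its conjugate by $T(\lambda-v)$ is an upper unipotent with entry $-(v-v')$; two such unipotents generate $\SL_2(\mathbb F_p)$, so $\mu$ projects onto each factor of $G$. To get $\langle\operatorname{supp}\mu\rangle=G$ it suffices, by Goursat's lemma applied inductively over the factors (each $\PSL_2(p_j)$ being non-abelian simple), to rule out that $\operatorname{supp}\mu$ projects into the graph of an isomorphism $\PSL_2(p_i)\xrightarrow{\sim}\PSL_2(p_j)$ with $i\ne j$: this is automatic when $p_i\ne p_j$, and when $p_i=p_j=p$ with $\lambda_i\ne\lambda_j$, such an isomorphism would be conjugation by some $h\in\PGL_2(\mathbb F_p)$ (no field automorphisms, $p$ prime), hence preserve traces up to sign, forcing $\lambda_i-v\equiv\pm(\lambda_j-v)$ and $\lambda_i-v'\equiv\pm(\lambda_j-v')\pmod p$, which is impossible for $p>|v-v'|$. (ii) \emph{Non-concentration and growth.} One needs, for $\ell_1=k^{O(1)}(\log p_1)^{O(1)}$, that $\sigma^{*\ell_1}$ is not concentrated near any proper approximate subgroup of $G$; this comes from Helfgott's product theorem in $\SL_2(\mathbb F_p)$ \cite{H}, in the product (super-approximation) form of Bourgain--Gamburd--Sarnak \cite{BGS} and Golsefidy--Srinivas \cite{GS}, using that among its first $\ell_1$ steps the walk takes the increment values $v$ and $v'$ a positive proportion of the time (quantified by \eqref{eq:atombd}). (iii) \emph{Quasirandomness.} Every non-trivial irreducible representation of $\PSL_2(p_j)$ has dimension $\ge(p_j-1)/2$, which is what turns non-concentration and growth into genuine $L^2$-flattening. (iv) \emph{Uniformity.} The gaps must be uniform in the $p_j$ and degrade only polynomially in $k$, which is the content of the uniform-expansion results of Breuillard--Gamburd \cite{BG} and Golsefidy--Srinivas \cite{GS}; these fix the exponents of $k$ and of $\log p_1$ in $\ell_0$, hence in the final bound.

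The main obstacle is step (ii) together with the bookkeeping of (iv): assembling the cited super-approximation results into a single flattening statement for the (possibly repeated) factors of $G$ with the stated quantitative dependence — uniform in the primes, polynomial in $k$, poly-logarithmic in $p_1$ — and, inside this, absorbing the heavy tail \eqref{eq:tail} of $V_1$. Even a finitely supported $V_1$ forces one to run the machine across $\sim\log p_1$ dyadic scales; the slow $\log^{-b}$ decay, which makes the increments and the partial products of unbounded size, is what one must control on those scales, and it is this (together with the polynomial loss from coincident primes) that produces the power $\log^{C}p_1$ and the $k^{6}$ in place of $\log p_1$ and $k$. I expect the trace obstruction in (i), the quasirandomness in (iii), and the soft bootstrap to be routine; the real work is in making (ii)+(iv) effective with these parameters.
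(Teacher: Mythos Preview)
Your symmetrisation step contains a fatal error. You take $\sigma=\mu*\check\mu$ and aim to prove $\|\sigma^{*\ell_0}-u_G\|_2\le\tfrac14|G|^{-1/2}$, but this is impossible: the support of $\sigma$ is $SS^{-1}$ with $S=\operatorname{supp}\mu$, and since $T(\lambda-a)T(\lambda-b)^{-1}=\left(\begin{smallmatrix}1&b-a\\0&1\end{smallmatrix}\right)$ for every $a,b$ (see \eqref{eq:a*b'}), the set $SS^{-1}$ lies inside the upper-unipotent subgroup of each factor. Thus every convolution power $\sigma^{*\ell}$ is supported on a proper abelian subgroup of $G$ and can never approach $u_G$; equivalently, $\|\widehat\sigma(\rho)\|_{\mathrm{op}}=1$ (hence $\|\widehat\mu(\rho)\|_{\mathrm{op}}=1$) for every nontrivial irreducible $\rho$ having a unipotent-fixed vector, and your inequality $\|\widehat\mu(\rho)\|_{\mathrm{op}}\le4^{-1/(2\ell_0)}$ is false for those $\rho$. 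The paper flags exactly this obstruction in the remark after Lemma~\ref{l:gen}, and circumvents it by passing to the symmetric set $S^3S^{-3}$ (Lemma~\ref{l:gen_prod}), which \emph{does} generate the full product --- a step your proposal omits entirely.

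Even after that repair, your route is far heavier than what the proposition requires, and several of your claimed inputs are misplaced. The paper's proof uses no Bourgain--Gamburd flattening at all: it takes Helfgott's polylogarithmic diameter bound for $\SL_2(p)$, feeds it through the Babai--Seress estimate $\Delta(G)\le20k^3\max_j\Delta(\PSL_2(p_j))^2$ for products of nonabelian simple groups, and then applies the Diaconis--Saloff-Coste comparison to get a spectral gap $\ge1/(64\Delta^2)$ for the simple random walk on $S^3S^{-3}$; a one-line convexity argument (writing the true transition kernel as $\alpha$ times this walk plus a stochastic remainder, with $\alpha$ the quantity in \eqref{eq:atombd}) then transfers the gap to the $\mu$-walk. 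In particular the tail hypothesis \eqref{eq:tail} plays no role whatsoever in this proposition --- only the two atoms $v,v'$ matter --- and the Breuillard--Gamburd and Golsefidy--Srinivas results you invoke are used in the paper only for the separate upgraded bound (Lemma~\ref{cor:mix-upgr}), where one is content to restrict to a density-one set of primes rather than all primes as here.
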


\noindent Then we shall deduce:
\begin{cor}\label{cor:mix} There exists $C > 0$ so that for $x \geq \max\{ 5, k!\}$, $n \geq  C k^8 \log^{ C } x$,
\begin{equation}\label{eq:cor:mix} \mathbb P \left\{ \left| \frac{1}{x}  \sum_{x < p \leq 2x} \log p \times  \# ( Z(P_n, p))_k  - 1 \right| \geq \frac{1}{2} \right\}  
\leq  C \left[ \exp(-\frac{n} { C  k^6 \log^{ C}x})+\frac{k^{k} \log x}{x} \right] ~. \end{equation}
\end{cor}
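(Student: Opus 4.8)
The plan is to run the second moment method for
\[ Y:=\frac{1}{x}\sum_{x<p\le 2x}\log p\cdot\#(Z(P_n,p))_k~, \]
evaluating $\mathbb{E}[Y]$ and $\mathbb{E}[Y^2]$ by means of Proposition~\ref{prop:mix}. By Lemma~\ref{lem:computecharpolyfromtransfer}, for a prime $p$ and $\lambda\in\mathbb{F}_p$ one has $P_n(\lambda)\equiv 0\pmod p$ precisely when $\Phi_n(\lambda)$ lies in $B_p:=\{g\in\PSL_2(p):g_{11}=0\}$; a lift of an element of $B_p$ to $\SL_2(p)$ is determined by its second row, whose first entry is nonzero, giving $p(p-1)$ lifts and hence $\#B_p=p(p-1)/2$, so $\#B_p/N_p=1/(p+1)$ and
\[ \#(Z(P_n,p))_k=\sum_{(\lambda_1,\dots,\lambda_k)\in(\mathbb{F}_p)_k}\ \prod_{j=1}^{k}\mathbbm{1}\{\Phi_n(\lambda_j)\in B_p\}~. \]
(Monicity of $P_n$ also gives the deterministic bound $\#(Z(P_n,p))_k\le n^k$, which will not be needed.) It is harmless to assume throughout that the estimate to be proved is nontrivial, i.e.\ $k^k\log x\le x$, and that $x$ exceeds an absolute constant; this places $p>x$ comfortably above $k$ in all estimates below and makes the prime number theorem effective, the finitely many remaining pairs $(k,x)$ being absorbed into $C$.

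For $\mathbb{E}[Y]$: since $n\ge Ck^8\log^C x$ dominates $Ck^7\log^C p$ for every $p\le 2x$, Proposition~\ref{prop:mix} applies and gives, for fixed distinct $\lambda_1,\dots,\lambda_k\in\mathbb{F}_p$, that $\big|\mathbb{P}\{\forall j:\Phi_n(\lambda_j)\in B_p\}-(p+1)^{-k}\big|\le d_k(n)$. Summing over the $\#(\mathbb{F}_p)_k=p(p-1)\cdots(p-k+1)$ tuples and using $\#(\mathbb{F}_p)_k/(p+1)^k=1+O(k^2/p)$, then averaging over $p$ and invoking the prime number theorem for $\tfrac1x\sum_{x<p\le 2x}\log p=1+o(1)$, I get $\mathbb{E}[Y]=1+o(1)+O(k^2/x)+O\big(x^k d_k(n)\big)$, hence $|\mathbb{E}[Y]-1|\le\tfrac14$ in the nontrivial range. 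For $\mathbb{E}[Y^2]=x^{-2}\sum_{p,q}\log p\log q\,\mathbb{E}[\#(Z(P_n,p))_k\#(Z(P_n,q))_k]$ I split $p\ne q$ from $p=q$. When $p\ne q$, Proposition~\ref{prop:mix} applied to the $2k$ distinct points $\lambda_1,\dots,\lambda_k\in\mathbb{F}_p$ and $\mu_1,\dots,\mu_k\in\mathbb{F}_q$ (no cross-constraint is needed, and $n$ exceeds $C(2k)^7\log^C(2x)$ for $C$ large) yields $\mathbb{E}[\cdots]=1+O(k^2/x)+O\big(x^{2k}d_{2k}(n)\big)$, so the off-diagonal part equals $\big(\tfrac1x\sum_p\log p\big)^2$ up to $O(\log x/x)+O(k^2/x)+O\big(x^{2k}\log x\,d_{2k}(n)\big)$, matching $(\mathbb{E}[Y])^2$. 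When $p=q$, I group the pairs $(\vec\lambda,\vec\mu)\in(\mathbb{F}_p)_k\times(\mathbb{F}_p)_k$ by the overlap $m=2k-\#(\{\lambda_i\}\cup\{\mu_j\})\in\{0,\dots,k\}$: the integrand collapses to $\prod_\nu\mathbbm{1}\{\Phi_n(\nu)\in B_p\}$ over the $2k-m$ distinct values $\nu$, with expectation $(p+1)^{-(2k-m)}+O(d_{2k-m}(n))$ by Proposition~\ref{prop:mix}, while the number of pairs with overlap $m$ is at most $\binom km k^m(p+1)^{2k-m}$. Hence the main term of $\mathbb{E}[\#(Z(P_n,p))_k^2]$ is at most $\sum_{m=0}^k\binom km k^m=(k+1)^k$, the total is $(k+1)^k+O\big(k\,x^{2k}d_{2k}(n)\big)$, and the diagonal part contributes $O\big((k+1)^k\log x/x\big)+O\big(x^{2k}\log x\,d_{2k}(n)\big)$.

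Assembling the two moments, the $\big(\tfrac1x\sum_p\log p\big)^2$ terms cancel, leaving $\operatorname{Var}(Y)=O\big((k+1)^k\log x/x\big)+O\big(x^{2k}\log x\,d_{2k}(n)\big)$. Since $(k+1)^k\le e\,k^k$ and $|\mathbb{E}[Y]-1|\le\tfrac14$, Chebyshev's inequality gives $\mathbb{P}\{|Y-1|\ge\tfrac12\}\le\mathbb{P}\{|Y-\mathbb{E}[Y]|\ge\tfrac14\}\le 16\operatorname{Var}(Y)$, and the claimed bound follows once each term $x^{2k}\,d_{2k}(n)$ has been absorbed into a single clean $\exp(-n/(Ck^6\log^C x))$. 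This absorption is the crux: from $d_{2k}(n)\le\exp(-n/(C(2k)^6\log^C(2x)))$ one gets $x^{2k}\log x\,d_{2k}(n)=\exp\big(O(k\log x)-n/(C\,2^6k^6\log^C(2x))\big)$, and the hypothesis $n\ge Ck^8\log^C x$ — carrying a factor $k^8$ rather than $k^6$ and a power of $\log x$ with a margin over Proposition~\ref{prop:mix} — is exactly what makes the subtracted term beat the $\exp(2k\log x)$ produced by the number of tuples, yielding $\exp(-n/(C'k^6\log^{C'}x))$ for a suitable $C'$. I expect the main obstacle to be precisely this coordinated bookkeeping of exponents (together with tracking the combinatorial factor $(k+1)^k$ in the diagonal term), which is what dictates the stated ranges $x\ge k!$ (so that $k^k\log x/x$ is meaningful) and $n\ge Ck^8\log^C x$; the individual moment computations are routine applications of Proposition~\ref{prop:mix}.
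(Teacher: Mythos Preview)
Your proposal is correct and follows essentially the same approach as the paper: a second-moment computation for $Y$ via Proposition~\ref{prop:mix}, separating the diagonal $p=q$ from the off-diagonal $p\ne q$, followed by Chebyshev's inequality. The only cosmetic difference is that the paper handles the diagonal term with the exact identity $(N)_k^2=\sum_{\ell=0}^k\binom{k}{\ell}^2\ell!\,(N)_{2k-\ell}$ (giving main term $A(k)=\sum_\ell\binom{k}{\ell}^2\ell!\le Ck^k$), whereas you use the cruder overlap bound $\binom{k}{m}k^m(p+1)^{2k-m}$ summing to $(k+1)^k$; both yield $O(k^k)$ and the rest of the bookkeeping is identical.
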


\noindent This will suffice for the
\begin{proof}[Proof of Theorem~\ref{thm:1} with a stretched exponential bound on the probability]
Let $C$ be the constant of Corollary~\ref{cor:mix}, let  $c>0$ be a  sufficiently small constant, and let $n$ be sufficiently large. For $k \leq n^c$, let $x =\exp((n/ k^8)^{1/(2C)})$. 
Since $k!\leq k^{k+1}/e^{k-1}$, we have $x\geq \max\{5,k!\}$.  For this choice of $x$ and $k$, the right-hand side of (\ref{eq:cor:mix}) is bounded by $\exp(-n^c)$. 
The right-hand side of (\ref{eq:bv}) is also bounded by $\exp(-n^c)$, with probability  $\geq 1 - C \exp(-c n^c)$ (recall the second statement of Corollary~\ref{cor:height}). Thus \eqref{eq:bv} and Corollary~\ref{cor:mix} imply that on an event of probability $\geq 1 - \exp(-n^c)$,
\[ 
    |\# \left( (Z(P_n))_k / \operatorname{Gal}(P / \mathbb Q)\right) - 1 | \leq 1/2~, 
\]
whence $\operatorname{Gal}(P/ \mathbb Q)$ acts $k$-transitively on $Z(P_n)$. In particular (taking $k=1$), $P_n = Q^m$ for some irreducible $Q \in \mathbb Z[x]$. Lemma~\ref{l:mult} below ensures that, with probability $\geq 1 - C \exp(-n^c)$, this could only hold for $m=1$, thus $P$ is irreducible and in particular $\# P_n^{-1}(0) = n$.

To conclude the proof of the theorem, we recall (see \cite{BV} for references) that a group acting $k$-transitively on a set of size $n$ is either $S_n$ or $A_n$, provided that $k \geq 6$ (using the classification of finite simple groups), or that $k \geq 6 \log n$ (by an argument of Wielandt avoiding classification), or that $k\geq 30 \log^2 n$ (using an elementary argument of Bochert and Jordan from the XIX  century).  
\end{proof}

\noindent The lemma that we used to rule out proper powers of irreducibles is as follows:
\begin{lemma}\label{l:mult} There exists $C, c > 0$ so that the probability that there exists $m \geq 2$ and $Q \in \mathbb Z[x]$ such that $P_n = Q^m$ is bounded by $C \exp(-n^c)$.
\end{lemma}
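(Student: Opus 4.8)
The plan is to show that if $P_n = Q^m$ with $m \geq 2$, then $P_n$ and its derivative $P_n'$ share a common factor of large degree, and to rule this out with exponential probability using the transfer-matrix identity of Lemma~\ref{lem:computecharpolyfromtransfer} together with the mixing estimate of Proposition~\ref{prop:mix}. First I would reduce to a statement modulo a prime: if $P_n = Q^m$ over $\mathbb Z$ with $m \geq 2$, then every root of $P_n$ has multiplicity divisible by $m$, so $\gcd(P_n, P_n')$ has degree $\geq n/2$; reducing modulo a suitable prime $p$ (chosen so that $P_n$ stays separable-free in the right sense), one gets that $P_n \bmod p$ has at most $n/2$ distinct roots in $\overline{\mathbb F_p}$, hence in particular $\#(Z(P_n, p))_k$ is substantially smaller than the typical value $\approx x$ predicted for a squarefree polynomial whose Galois group is large. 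More robustly: the event $\{P_n = Q^m,\ m\ge 2\}$ forces $\# Z(P_n,p) \le n/2$ (distinct roots in $\mathbb F_p$) for \emph{every} prime $p$, so $\frac1x\sum_{x<p\le 2x}\log p\cdot\#(Z(P_n,p))_1 \le n/2$ deterministically on this event, whereas Corollary~\ref{cor:mix} (with $k=1$) shows this average concentrates near $1$ — wait, that only gives $\le n/2$, not a contradiction. So the $k=1$ count alone is too weak; the right move is to control the number of distinct roots directly.

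The cleaner route: work with the discriminant, or rather with $R_n := \operatorname{Res}(P_n, P_n')$, the resultant, which is a nonzero integer exactly when $P_n$ is squarefree, and which vanishes precisely when $P_n$ has a repeated factor. So it suffices to prove $\mathbb P\{P_n \text{ is not squarefree}\} \le C\exp(-n^c)$ — actually even this is stronger than needed, since $P_n = Q^m$ requires not just a repeated factor but that \emph{all} multiplicities are $\geq 2$. I would prove the stronger squarefree statement: $P_n$ is squarefree over $\mathbb Q$ with probability $\ge 1 - C\exp(-n^c)$. For this, pick a prime $p$ of moderate size (say $p$ comparable to a small power of $n$, or even a fixed prime $p \ge C$ as in Proposition~\ref{prop:mix}); it is enough to show that with the stated probability, $P_n \bmod p$ has $\geq n/2 + 1$ distinct roots in some extension, equivalently $\#(Z(P_n, p))_k$ is close to its typical value for $k$ of order, say, $2$ or any fixed small constant — but that still only controls distinct roots in $\mathbb F_p$ itself, not in extensions.

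So here is the step I expect to be the main obstacle, and how I would handle it: the transfer-matrix description gives $P_n(\lambda) = (\Phi_n(\lambda))_{11}$, and $\Phi_n$ is a product of the matrices $T(\lambda - V_j)$; repeated roots of $P_n$ correspond to $\lambda$ with $(\Phi_n(\lambda))_{11} = (\Phi_n'(\lambda))_{11} = 0$, and differentiating the transfer-matrix recursion expresses $\Phi_n'(\lambda)$ as a sum of $n$ transfer-matrix products with one factor replaced. The idea is to run the mixing estimate of Proposition~\ref{prop:mix} for a pair of points, or to use the joint distribution of $(\Phi_n(\lambda), \Phi_n'(\lambda))$ modulo $p$: one shows that for each fixed $\lambda \in \mathbb F_p$, the probability that both $(\Phi_n(\lambda))_{11} \equiv 0$ and the corresponding derivative condition hold is $O(1/p^2 + \exp(-cn/\log^C p))$ — uniformly — by an equidistribution argument for the walk $(\Phi_n, \partial_\lambda \Phi_n)$ on an appropriate group (a semidirect-product / tangent-bundle version of $\SL_2(p)$, of size $\approx p^4$). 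Summing over the $p$ choices of $\lambda$ gives that $P_n \bmod p$ is squarefree with probability $\ge 1 - C(1/p + p\exp(-cn/\log^C p))$; choosing $p = p(n)$ growing slowly (e.g. $p = \lfloor\exp(n^{c'})\rfloor$-bounded, balanced against the exponential term) yields the bound $1 - C\exp(-n^c)$, and squarefreeness mod $p$ implies squarefreeness over $\mathbb Q$, hence $m = 1$. The technical work is in setting up the tangent-walk equidistribution cleanly and checking it inherits a spectral gap from the arguments behind Proposition~\ref{prop:mix}; an alternative, if one only wants the \emph{stretched}-exponential bound already claimed in the theorem's proof, is to avoid derivatives entirely and instead argue that $P_n = Q^m$ with $m\ge 2$ forces, via Lemma~\ref{lem:computecharpolyfromtransfer}, an algebraic degeneracy of the transfer matrix $\Phi_n$ (e.g. $\Phi_n$ lies on a positive-codimension subvariety of $\SL_2$ as a matrix of polynomials), which a single application of Proposition~\ref{prop:mix} at one well-chosen prime, together with a counting bound on such degenerate configurations, rules out with probability $1 - \exp(-n^c)$.
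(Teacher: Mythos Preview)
Your proposal takes a genuinely different route from the paper, and the route you sketch has a real gap. The paper's argument avoids derivatives, squarefreeness, and tangent walks entirely. The observation you are missing is this: if $P_n = Q^m$ over $\mathbb Z$ with $m \geq 2$, then for every integer $\lambda$ the \emph{value} $P_n(\lambda) = Q(\lambda)^m$ is a perfect $m$-th power; hence modulo any prime $p$ with $m \mid p-1$, each $P_n(\lambda)$ lies in the set $A \subset \mathbb F_p$ of $m$-th power residues, which has density at most $1/2$. Proposition~\ref{prop:mix} (applied with $k$ distinct $\lambda_1,\dots,\lambda_k \in \mathbb F_p$) already says that $(\Phi_n(\lambda_1),\dots,\Phi_n(\lambda_k))$ is nearly uniform on the product group, so the $(1,1)$-entries $P_n(\lambda_j)$ are approximately independent and the probability they all fall in $A$ is at most roughly $2^{-k}$ plus the mixing error $\exp(-n/(Ck^6\log^C p))$. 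Taking $k = \lfloor n^c\rfloor$, a prime $p$ with $m\mid p-1$ in the range dictated by Proposition~\ref{prop:mix}, and a union bound over $2\le m\le \sqrt n$ gives the stated $C\exp(-n^c)$. This is a two-line application of the mixing proposition to a value-distribution question, not a root-counting one.

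By contrast, your main line requires equidistribution of the pair $(\Phi_n(\lambda),\Phi_n'(\lambda))$ on (a version of) the tangent group $T\SL_2(p) \cong \SL_2(p)\ltimes\mathfrak{sl}_2(p)$, a group of order $\asymp p^6$. That does \emph{not} follow from Proposition~\ref{prop:mix}, Helfgott's diameter bound, or the Diaconis--Saloff-Coste comparison used in Section~\ref{s:mix}: you would need to prove separately that your generators generate this larger group (note that $\partial_\lambda T(\lambda - v)$ is the \emph{same} matrix for every $v$, so the ``tangent part'' of each generator is identical and generation is not automatic), and then obtain a diameter or spectral-gap bound for it. You flag this as ``the technical work,'' but it is genuinely new content that the paper neither proves nor needs. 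Your fallback suggestion---an unspecified ``algebraic degeneracy'' of $\Phi_n$ as a matrix of polynomials---is too vague to evaluate. The upshot: your approach aims at the stronger conclusion that $P_n$ is squarefree, which is attractive, but as written it is incomplete, whereas the paper's value-based argument is short and uses only what is already on the table.
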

\noindent We prove it in Section~\ref{s:mult}.

\subsection{The exponential bound} Now we explain how to upgrade the arguments above to obtain the statement as claimed. In Section~\ref{s:upgr}, we use the results of Breuillard--Gamburd \cite{BG} and Golsefidy--Srinivas \cite{GS} to obtain the following version of Corollary~\ref{cor:mix}:

\begin{lemma}\label{cor:mix-upgr} For any $k \geq 1$ there exist $C_k, c_k>0$ such that for $c_k x^{\frac{1}{2k}} \geq n \geq C_k \log x$
\begin{equation}\label{eq:cor:mix-upgr} \mathbb P \left\{ \left| \frac{1}{x}  \sum_{x < p \leq 2x} \log p \times \# ( Z(P_n, p))_k  - 1 \right| \geq \frac{1}{2} \right\}  
\leq  C_k \left[ \exp(-c_k n)+\frac{\log x}{x} \right] ~. \end{equation}
\end{lemma}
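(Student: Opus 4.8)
The plan is to upgrade the proof of Corollary~\ref{cor:mix} by replacing the input Proposition~\ref{prop:mix}, which gives a stretched-exponential rate of equidistribution for the tuple $(\Phi_n(\lambda_1),\dots,\Phi_n(\lambda_k))$ in $\PSL_2(p_1)\times\cdots\times\PSL_2(p_k)$, with a genuine exponential rate. The key observation is that for \emph{fixed} $k$ the rate in Proposition~\ref{prop:mix} is lossy only because the spectral-gap estimates available at that point (via Helfgott) are polynomially weak in $p$; once $k$ is fixed we are dealing with a random walk on a \emph{fixed} semisimple group (a product of copies of $\PSL_2$ over the distinct primes occurring among $p_1,\dots,p_k$, which live in $(x,2x]$, so in particular the primes grow with $x$), and the uniform exponential expansion results of Breuillard--Gamburd \cite{BG} (super-strong approximation) together with Golsefidy--Srinivas \cite{GS} give a spectral gap that is uniform over the relevant family of quotients. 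Concretely, I would first show, using these results, that there is $c_k>0$ depending only on $k$ (and on the distribution of $V_1$) such that
\[ d_k(n) \leq C_k \exp(-c_k n) \quad\text{whenever } n \geq C_k \log p_1, \]
i.e.\ the same quantity $d_k(n)$ controlled in Proposition~\ref{prop:mix} now decays at an honest exponential rate, at the cost of letting the constants depend on $k$.

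The passage from this mixing statement to Lemma~\ref{cor:mix-upgr} should then be essentially identical to the deduction of Corollary~\ref{cor:mix} from Proposition~\ref{prop:mix}, so I would simply reprise that argument keeping track of the new error term. One computes the first and second moments of $\frac{1}{x}\sum_{x<p\le 2x}\log p\times \#(Z(P_n,p))_k$ over the randomness of the $V_j$. For each prime $p$, $\#(Z(P_n,p))_k$ counts ordered $k$-tuples of distinct roots of $P_n$ in $\mathbb F_p$, and by Lemma~\ref{lem:computecharpolyfromtransfer} a root is a value $\lambda\in\mathbb F_p$ with $(\Phi_n(\lambda))_{11}=0$; the mixing estimate says that, jointly over any $k$ distinct such $\lambda$'s (and even over several primes at once), the transfer matrices $\Phi_n(\lambda_j)$ are within $\exp(-c_k n)$ of independent and uniform on $\PSL_2(p_j)$. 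Since the proportion of matrices in $\PSL_2(p)$ with vanishing $(1,1)$-entry is $\sim 1/p$, the expected value of the sum is $1+O(\exp(-c_k n))+O(\text{PNT error})$, and a similar computation over pairs of primes $p,q\in(x,2x]$ (using the mixing statement for $k$ doubled, handling the diagonal $p=q$ separately, which contributes the $\log x/x$ term after noting there are about $x/\log x$ primes) gives that the variance is $O(\exp(-c_k n)) + O(\log x/x)$. Chebyshev's inequality then yields \eqref{eq:cor:mix-upgr}. The upper constraint $n\le c_k x^{1/(2k)}$ is what is needed to make the error terms coming from the height of $P_n$ (Corollary~\ref{cor:height}) and from the Breuillard--Varj\'o estimate \eqref{eq:bv} negligible: one needs $x^{1/2}$ to dominate $n^k\log(H(P_n)n)$, and with $\log H(P_n)\lesssim \exp(n^\epsilon)$ on a good event this forces $x \gtrsim n^{2k}$, i.e.\ $n \lesssim x^{1/(2k)}$.

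The main obstacle is the first step: extracting a \emph{uniform} (over the primes $p_i\in(x,2x]$, and over the choice of distinct $\lambda_i$) exponential spectral gap for the product walk from \cite{BG} and \cite{GS}. Several points need care. First, one must verify that the set of matrices $\{T(\lambda - v) : v \in \operatorname{supp}(V_1)\}$, or rather a suitable set of products of these, generates a Zariski-dense subgroup of $\SL_2$ (equivalently is not virtually solvable) — this is where the non-degeneracy of $V_1$ and the quantity \eqref{eq:atombd} enter, and one has to be a little careful because $T(\lambda-v)T(\lambda-v')^{-1}$ depends on $\lambda$; the generation should be checked at the level of the step distribution of the walk on $\SL_2(\mathbb Z[\lambda]/(\text{something}))$ or by a direct argument showing the walk is not supported on a proper algebraic subgroup. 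Second, to get a gap for the \emph{joint} walk on $\PSL_2(p_1)\times\cdots\times\PSL_2(p_k)$, one invokes the strong approximation / super-strong approximation machinery, which requires that the relevant tuples $(\lambda_1,\dots,\lambda_k)$ with the $\lambda_i$ distinct when $p_i=p_j$ give genuinely ``independent'' quotients — this is precisely the Goursat-type condition that makes \cite{GS} applicable, and it uses that $\PSL_2(p)$ is quasi-simple with no exceptional isomorphisms in the range $p\ge C$. Third, one must confirm that the dependence on $k$ of the resulting constants $C_k,c_k$ is acceptable — here it is, because $k$ is fixed in Lemma~\ref{cor:mix-upgr} (unlike in the stretched-exponential argument where $k$ was allowed to grow with $n$), so we may freely let $C_k,c_k$ blow up with $k$. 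Once these generation and independence inputs are in place, the non-backtracking / $\ell^2$-flattening estimates of \cite{BG} give the exponential mixing after $O_k(\log p_1)$ steps, and the rest of the argument is the moment computation sketched above.
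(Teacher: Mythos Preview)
Your overall strategy is correct and matches the paper's: replace the Helfgott-based spectral gap in Proposition~\ref{prop:mix} by a uniform gap coming from \cite{BG} and \cite{GS}, then rerun the moment computation of Corollary~\ref{cor:mix}. However, there is a genuine gap in your understanding of what \cite{BG} provides, and this leads to a wrong explanation of the upper constraint $n \le c_k x^{1/(2k)}$.

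The Breuillard--Gamburd theorem does \emph{not} give a uniform spectral gap for the walk on $\PSL_2(p)$ for \emph{all} primes $p$. It only gives the uniform gap for primes in a set $\mathcal P$ of density one --- more precisely, with $\#\big((\text{primes}\setminus\mathcal P)\cap(x,2x]\big)\le \sqrt x$. For the exceptional primes one has no spectral gap estimate at all, and the paper handles their contribution by the trivial bound $|\#(Z(P_n,p))_k - 1| \le n^k$ (since $P_n$ has degree $n$). Summing over the $\le \sqrt x$ exceptional primes gives a contribution $\lesssim n^k/\sqrt x$, and requiring this to be $\le 1/6$ is exactly what forces $x \gtrsim n^{2k}$, i.e.\ $n \le c_k x^{1/(2k)}$. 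This is the true origin of the upper constraint.

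Your stated explanation --- that the upper bound on $n$ is needed to control $\log H(P_n)$ and the error in \eqref{eq:bv} --- is incorrect: neither Corollary~\ref{cor:height} nor \eqref{eq:bv} enters the proof of Lemma~\ref{cor:mix-upgr} at all. Those ingredients are used only afterwards, in deducing Theorem~\ref{thm:1} from the lemma. If \cite{BG} really did apply to all primes (as you implicitly assume), your argument would prove the lemma with no upper constraint on $n$, which should have been a red flag.
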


\noindent In the second part of Section~\ref{s:mult}, we improve the probability estimate in Lemma~\ref{l:mult}:
\begin{lemma}\label{l:mult-upgr} There exist $C, c > 0$ so that the probability that there exist $m \geq 2$ and $Q \in \mathbb Z[x]$ such that $P_n = Q^m$ is bounded by $C \exp(-c n)$.
\end{lemma}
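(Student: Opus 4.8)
\textbf{Proof proposal for Lemma~\ref{l:mult-upgr}.}
The plan is to observe that the event in question is in fact \emph{empty}: for the model \eqref{eq:defHn} the polynomial $P_n$ is always squarefree, so that $P_n=Q^m$ with $m\ge 2$ and $Q\in\mathbb Z[x]$ cannot occur once $n\ge 1$, and the probability is $0$, a fortiori $\le C\exp(-cn)$ for any $C,c$. The reduction is immediate: if $P_n=Q^m$ over $\mathbb Z$ (or even over $\mathbb C$) with $m\ge 2$, then every complex root of $P_n$ has multiplicity $\ge m\ge 2$, i.e.\ $H_n$ has a repeated eigenvalue; so it suffices to show that $H_n$ has simple spectrum.

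This is the classical fact that a Jacobi matrix --- symmetric, tridiagonal, with nonzero off-diagonal entries, which here are all equal to $1$ --- has $n$ distinct eigenvalues, and I would prove it as follows. Fix $\lambda\in\mathbb C$; the $(n-1)\times(n-1)$ submatrix of $\lambda\mathbbm{1}-H_n$ obtained by deleting the first row and the last column is, after the obvious reindexing, triangular with every diagonal entry equal to $-1$ (the first sub-diagonal of $-H_n$), hence has determinant $\pm 1\neq 0$. Therefore $\operatorname{rank}(\lambda\mathbbm{1}-H_n)\ge n-1$, so $\dim\ker(\lambda\mathbbm{1}-H_n)\le 1$ for every $\lambda$; since $H_n$ is real symmetric, hence diagonalisable, every eigenvalue is simple, and $P_n$ is squarefree. (Alternatively, one can stay within the transfer-matrix picture of Lemma~\ref{lem:computecharpolyfromtransfer}: the recursion $\binom{P_n}{P_{n-1}}=T(\lambda-V_n)\binom{P_{n-1}}{P_{n-2}}$ together with $\det\Phi_n\equiv 1$ gives $\gcd(P_n,P_{n-1})=\gcd(P_1,P_0)=1$, and a repeated root $\alpha$ of $P_n$ would then have to satisfy $P_{n-1}(\alpha)\neq 0$ and, via $P_n'(\alpha)=0$, the identity $(P_{n-2}/P_{n-1})'(\alpha)=1$ with $\alpha$ real; this is impossible because $P_{n-2}/P_{n-1}$ is, up to sign, the diagonal resolvent entry $\langle\delta_{n-1},(\lambda\mathbbm{1}-H_{n-1})^{-1}\delta_{n-1}\rangle$, whose derivative is strictly negative off the spectrum.)

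I do not expect a genuine obstacle here: once the Jacobi structure is spotted the statement is deterministic. The one point worth flagging is that the routes more in the spirit of the rest of the paper are, perhaps counter-intuitively, harder. Conditioning on $V_1,\dots,V_{n-1}$ and bounding $\mathbb P\{V_n\in S\}$, where $S$ is the (at most $2n-1$-point) zero set of $\lambda\mapsto\operatorname{disc}(P_n)$, does not yield exponential decay when $V_1$ carries an atom of mass close to $1$; and the mod-$p$ estimate of Lemma~\ref{cor:mix-upgr} with $k$-point statistics only detects the factorisation $P_n=Q^m$ once $k>\deg Q=n/m$, i.e.\ it handles $m\gtrsim n^{7/8}$ but not small $m$, in particular not $m=2$. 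The deterministic simple-spectrum argument disposes of all $m\ge 2$ at once, and makes the claimed exponential (indeed, zero) probability immediate.
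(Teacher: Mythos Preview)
Your argument is correct: for the model \eqref{eq:defHn} the off-diagonal entries are all equal to $1$, so $H_n$ is a genuine Jacobi matrix, and the rank argument you give (the $(n-1)\times(n-1)$ minor of $\lambda\mathbbm{1}-H_n$ obtained by deleting the first row and last column is triangular with $-1$'s on the diagonal) shows that $\lambda\mathbbm{1}-H_n$ has rank $\ge n-1$ for every $\lambda$. Since $H_n$ is real symmetric, algebraic and geometric multiplicities coincide, whence $P_n$ is squarefree and the event $\{P_n=Q^m,\ m\ge2\}$ is empty for $n\ge1$.

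The paper takes a completely different, and considerably heavier, route. It argues that if $P_n=Q^m$ then $P_n(0)$ is a perfect square modulo every $q$, shows (via Lemma~\ref{l:gen} and strong approximation) that the transfer matrices $T(v),T(v')$ generate a Zariski-dense subgroup of $\SL_2$, and then invokes the super-strong approximation theorem of Bourgain--Gamburd--Sarnak to equidistribute $\Phi_n(0)$ on $\SL_2(\mathbb Z/q\mathbb Z)$ for a large square-free $q$; this makes $P_n(0)$ a square mod $q$ only with exponentially small probability. That argument is structure-blind --- it would survive in models where the Jacobi simple-spectrum fact is unavailable --- but for the specific matrix \eqref{eq:defHn} it is unnecessary, and your observation reduces Lemma~\ref{l:mult-upgr} (and simultaneously Lemma~\ref{l:mult}) to a triviality. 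The same Jacobi argument also covers the variant \eqref{eq:defHn+} used in Section~\ref{s:dyson}, since there the $W_j$ are assumed nonzero.
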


\begin{proof}[Proof of Theorem~\ref{thm:1} with an exponential bound on the probability]
Arguing as above with Lemma~\ref{cor:mix-upgr} in place of Corollary~\ref{cor:mix}, taking $x = \exp(c'n)$ in (\ref{eq:bv}) and using the first estimate of (\ref{eq:height}) in Corollary~\ref{cor:height},  we obtain that
\[ \mathbb P \left\{ \# ((Z(P_n))_k / \operatorname{Gal}(P_n, \mathbb Q)) = 1 \right\}  \geq 1 - C_k \exp(- c_k n)~.\]
Applying this conclusion with $k=1$ and using  Lemma~\ref{l:mult-upgr} in place of Lemma~\ref{l:mult}, we obtain that $P_n$ is irreducible with probability $\geq 1 - C \exp(-cn)$. Now applying the conclusion with $k = 6$ and using the aforecited corollary of the classification of finite simple groups, we obtain that $\operatorname{Gal}(P_n / \mathbb Q) \geq A_n$ with probability $\geq 1 - C \exp(-cn)$. 
\end{proof}

\section{Algebraic preliminaries}\label{s:alg}

Here we prove two auxiliary results required in the next section. 
The first lemma is a finite field analogue of a fact  known over $\mathbb C$  (in which case the matrices generate a Zariski dense subgroup of $\operatorname{PGL}_2(\mathbb C)$ -- see further Section~\ref{s:mult} below).  
\begin{lemma}\label{l:gen} Let $v, v'$ be two distinct integers; let $p  \geq C(v,v') =\max(5, |v - v'|+1)$  be a prime number, and let $\lambda \in \mathbb F_p$. Then $\langle S^2S^{-2}\rangle = \langle S^{-2}S^{2}\rangle = \PSL_2(p)$, where  $S= \{ T(\lambda-v), T(\lambda-v')\}$.
\end{lemma}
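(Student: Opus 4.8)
The plan is to analyze the group $G = \langle S^2 S^{-2} \rangle$ generated by the single element $g = T(\lambda-v)^2 T(\lambda-v')^{-2}$ (the case $\langle S^{-2}S^2\rangle$ being symmetric), and to show it must be all of $\PSL_2(p)$. First I would compute $g$ explicitly: $T(\mu) = \left(\begin{smallmatrix}\mu & -1\\ 1 & 0\end{smallmatrix}\right)$ has $T(\mu)^2 = \left(\begin{smallmatrix}\mu^2-1 & -\mu\\ \mu & -1\end{smallmatrix}\right)$, and $T(\mu)^{-2} = \left(\begin{smallmatrix}-1 & \mu\\ -\mu & \mu^2-1\end{smallmatrix}\right)$. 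Writing $\mu = \lambda - v$, $\mu' = \lambda - v'$, one gets $g = T(\mu)^2 T(\mu')^{-2}$ as an explicit $2\times 2$ matrix whose entries are low-degree polynomials in $\mu, \mu'$ (equivalently in $\lambda$ and $d := v'-v \neq 0$). The key computation is the trace $\tr(g)$; since conjugacy class in $\PSL_2(p)$ (and whether an element is unipotent, split semisimple, or non-split semisimple, or central) is governed by $\tr(g) \bmod{\pm}$, I want to understand $\tr(g)$ as a function of $\lambda$.

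Next I would invoke the classification of subgroups of $\PSL_2(p)$ (Dickson's theorem): a subgroup is either (a) contained in a Borel (the stabilizer of a point on $\mathbb P^1(\mathbb F_p)$, i.e. conjugate into upper-triangular matrices), (b) contained in a dihedral group normalizing a torus, (c) one of $A_4, S_4, A_5$, or (d) all of $\PSL_2(p)$ (or $\PSL_2(p')$ for a subfield, which cannot occur for prime $p$). To exclude (a) and (b): an element $g$ lying in a Borel is either unipotent ($\tr g = \pm 2$) or leaves a point of $\mathbb P^1$ fixed; an element in a torus-normalizer either lies in the torus or is an involution. So I would show that for $p \geq \max(5,|d|+1)$ the element $g$ is \emph{not} of finite order dividing a small number and \emph{not} parabolic, by checking that $\tr(g) \not\equiv \pm 2$; and that the subgroup generated is genuinely $2$-dimensional, not contained in a single torus, by exhibiting that $T(\mu)^2$ and $T(\mu')^{-2}$ (or powers of $g$ together with a conjugate) do not share a common eigenvector over $\overline{\mathbb F_p}$ — this uses $\mu \neq \mu'$, i.e. $d \neq 0$, so the two matrices $T(\mu), T(\mu')$ have distinct fixed-point structure. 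To exclude (c), the exceptional groups $A_4, S_4, A_5$ have bounded order, so it suffices that $g$ have order $> 60$, or more cleanly, that the trace field / the explicit order of $g$ forces something larger; alternatively one notes $\PSL_2(p)$ for $p \geq 5$ is simple and compares the order $N_p = p(p^2-1)/2$ against $60$, reducing to small $p$ which are handled by the bound $p \geq \max(5,|d|+1)$ together with a direct check.

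The main obstacle I expect is the \emph{uniformity in $\lambda$}: the trace $\tr(g)$ is a fixed polynomial $P_d(\lambda) \in \mathbb Z[\lambda]$ (depending on $d$) of bounded degree, and I must rule out that $P_d(\lambda) \equiv \pm 2 \pmod p$ or that $g$ is otherwise degenerate \emph{for the particular residue} $\lambda$ at hand — there is no averaging over $\lambda$ here, the statement is for every $\lambda$. So the real content is: for every $\lambda \in \mathbb F_p$, the explicit matrix $g = g(\lambda,d)$ generates $\PSL_2(p)$. I would handle this by identifying the finitely many "bad" values of $\lambda$ (roots mod $p$ of the discriminant-type polynomials that detect $g$ parabolic, $g$ of small order, or the pair having a common eigenvector) and showing that each such bad locus either is empty modulo $p$ when $p$ exceeds an explicit bound in terms of $d$, or, if $\lambda$ is bad, then a \emph{different} generating set argument applies — e.g. using that $S^2 S^{-2}$ and its conjugate by $T(\mu)$ together cannot all be simultaneously triangularizable because that would force $T(\mu), T(\mu')$ into a common Borel or torus, contradicting $d \not\equiv 0 \pmod p$ (which holds since $p > |d|$). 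Packaging this cleanly — so that the single explicit lower bound $C(v,v') = \max(5,|v-v'|+1)$ suffices for \emph{all} residues $\lambda$ simultaneously — is the delicate bookkeeping, but each individual step is an elementary $\SL_2$ computation.
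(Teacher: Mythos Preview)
Your proposal has a fundamental misreading of the notation that makes the plan unworkable. The set $S^2S^{-2}$ is not a single element: by definition (stated just after the lemma in the paper) it consists of \emph{all} products $g_1g_2g_3^{-1}g_4^{-1}$ with each $g_i\in S=\{T(\lambda-v),T(\lambda-v')\}$, so up to sixteen elements. You propose to study only $g=T(\lambda-v)^2T(\lambda-v')^{-2}$ and argue that $\langle g\rangle=\PSL_2(p)$; but $\langle g\rangle$ is cyclic, hence abelian, and can never equal the simple non-abelian group $\PSL_2(p)$. Every element of $\PSL_2(p)$ lies in \emph{some} maximal subgroup (indeed, in a cyclic one), so no amount of trace analysis on a single $g$ can show it ``generates'' the whole group. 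Your later fallback of using $T(\mu)^2$ and $T(\mu')^{-2}$ separately does not help either, since those are not elements of $S^2S^{-2}$.

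The paper's proof exploits the full set $S^2S^{-2}$ in a very clean way. The crucial element is
\[
u \;=\; T(\lambda-v)\,T(\lambda-v)\,T(\lambda-v)^{-1}\,T(\lambda-v')^{-1}\;=\;T(\lambda-v)T(\lambda-v')^{-1}\;=\;\begin{pmatrix}1&v'-v\\0&1\end{pmatrix}\in S^2S^{-2},
\]
which is unipotent of order exactly $p$ (here one uses $p>|v'-v|$, so $v'-v\not\equiv0$). An element of order $p$ immediately excludes the dihedral maximal subgroups and $A_4,S_4,A_5$, since $p$ does not divide their orders. For the remaining case --- the Borel (point stabilizer) --- one observes that $u$ fixes only $\infty$, while a second element $T(\lambda-v)T(\lambda-v)T(\lambda-v')^{-1}T(\lambda-v)^{-1}\in S^2S^{-2}$ (a conjugate of $u$) fixes only $\lambda-v\neq\infty$; hence no common fixed point exists. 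This two-element argument is the missing idea in your plan, and it sidesteps entirely the ``uniformity in $\lambda$'' difficulty you anticipated: the unipotent $u$ is independent of $\lambda$, and the second element's fixed point is explicit for every $\lambda$.
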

Here and afterwards, we denote by $S^2S^{-2}$ the set of elements of the form $g_1g_2g_3^{-1}g_4^{-1}$, where $g_i\in S$, and by $\langle S^{-2}S^{2}\rangle$ the set of elements of the form $g_1^{-1}g_2^{-1}g_3g_4$.
Similarly to the complex case, it is easier to check that $S$ generates the group; however, it is important for the sequel to have a symmetric set of generators of the form $S^{j}S^{-j}$ (cf.\ Goldsheid \cite{G1} for the use of similar generating sets in the complex setting, for a somewhat similar purpose). It is not true that  $SS^{-1}$ generates $\PSL_2(p)$ -- indeed $\langle SS^{-1}\rangle$ is cyclic, see (\ref{eq:a*b'}).

\begin{proof}
Observe that for any $X \subset G$ the subgroups $\langle X X^{-1} \rangle$ and $\langle X^{-1} X \rangle$  are conjugate: indeed, assuming without loss of generality that $X \neq \varnothing$ and picking $x \in X$, $x^{-1} X X^{-1} x \subset \langle X^{-1} X \rangle$, whence $\langle X^{-1} X \rangle$ contains a conjugate of $\langle X X^{-1} \rangle$, and vice versa. Thus the two subgroups are of the same size, and thus they are conjugate.

According to this observation, it suffices to show that $G = \langle S^2S^{-2}\rangle = \PSL_2(p)$. If this is false, $G$ is contained in one of the maximal subgroups of $\PSL_2(p)$. According to a theorem of Moore and Wiman, as presented by Dickson \cite{Dickson}, these have the following form (the implied action is on the projective line):
\begin{enumerate}
\item dihedral groups of order $p-1$ for $p\geq 13$: each is the stabilizer of an unordered pair of points; 
\item dihedral groups of order $p + 1$ for $p \neq 7, 9$;
\item groups of order $p(p-1)/2$; each is the stabilizer of a point;
\item $S_4$ when $p\equiv\pm1 \mod8$;
\item $A_4$ when $p \equiv \pm 3, 5, \pm 13\mod{40}$. 
\item $A_5$ when $p\equiv\pm1\mod10$.
\end{enumerate}
Observe that for $p$ large enough (greater than $|v'-v|$)
\begin{equation}\label{eq:a*b'} u=T(\lambda-v) T(\lambda-v) T(\lambda-v)^{-1} T(\lambda-v')^{-1}  = T(\lambda-v) T(\lambda-v')^{-1} = \left( \begin{array}{cc} 1 &  v'-v \\ 0 & 1 \end{array}\right)~\end{equation}
has order $p$, which rules out the cases (1), (2), (4), (5), and (6) (note that if $p\equiv \pm1 \mod 10$, then $p\geq 11$). Further, $u$ fixes only the point $\infty = [ 1:0]$, whereas the element 
\[ T(\lambda-v) T(\lambda-v) T(\lambda-v')^{-1} T(\lambda-v)^{-1}   \]
fixes only $\lambda-v=[\lambda-v:1]=T(\lambda-v)\infty \neq \infty$; this rules out the case (3). 
\end{proof}

\begin{lemma} \label{l:gen_prod} Let $v, v'$ be two distinct integers; let $p_1 \geq \cdots \geq p_k \geq C(v, v') = \max(5, |v - v'|+1)$ be  prime numbers, and let $\lambda_1 \in \mathbb F_{p_1}, \cdots, \lambda_k \in \mathbb F_{p_k}$ be such that $\lambda_i \neq \lambda_{j}$ whenever $p_i = p_j$ and $i \neq j$.
Then $\langle S^3S^{-3}\rangle = \langle S^{-3}S^{3}\rangle= \PSL_2(p_1) \times \PSL_2(p_2) \times \cdots \times \PSL_2(p_k)$, where 
\begin{equation}\label{eq:gjk} S= \{(T(\lambda_1-v), \cdots, T(\lambda_k-v)),  (T(\lambda_1-v'), \cdots, T(\lambda_k-v'))\}~. \end{equation}  
\end{lemma}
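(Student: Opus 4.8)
The plan is to combine Lemma~\ref{l:gen} with the classical theory of subdirect products of nonabelian simple groups. Write $S=\{a,b\}$, so that $a=(T(\lambda_1-v),\dots,T(\lambda_k-v))$ and $b=(T(\lambda_1-v'),\dots,T(\lambda_k-v'))$, set $L_i=\PSL_2(p_i)$, and let $G=\langle S^3S^{-3}\rangle\le L_1\times\cdots\times L_k$. First note that $S^3S^{-3}\supseteq S^2S^{-2}$: in a word $g_1g_2g_3g_4^{-1}g_5^{-1}g_6^{-1}$ with $g_i\in S$ one may take $g_3=g_4$, leaving $g_1g_2g_5^{-1}g_6^{-1}$. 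Consequently the projection $\pi_i(G)$ onto the $i$-th factor contains $\langle S_i^2S_i^{-2}\rangle$, where $S_i=\{T(\lambda_i-v),T(\lambda_i-v')\}$, and this equals $L_i$ by Lemma~\ref{l:gen}; thus $G$ is a subdirect product of the $L_i$. By the structure theorem for subdirect products of nonabelian simple groups (an induction from Goursat's lemma, using that every normal subgroup of a product of nonabelian simple groups is a sub-product), it then suffices to prove that for each pair $i\neq j$ the projection $\pi_{ij}(G)\le L_i\times L_j$ is all of $L_i\times L_j$, and not merely the graph of an isomorphism $L_i\to L_j$. Since $\langle S^{-3}S^3\rangle$ is conjugate to $\langle S^3S^{-3}\rangle$ (by $g^3$, for any fixed $g\in S$), it is enough to treat $\langle S^3S^{-3}\rangle$.

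Fix $i\neq j$. As $\pi_{ij}(G)$ is subdirect in $L_i\times L_j$, Goursat's lemma and simplicity leave only the two possibilities above. If $p_i\neq p_j$ then $L_i\not\cong L_j$ (the order $p(p^2-1)/2$ determines the prime $p$), so the graph case cannot occur. Assume therefore $p_i=p_j=:p$, so that $\lambda_i\neq\lambda_j$ by hypothesis. Since $p$ is prime, $\operatorname{Aut}(\PSL_2(p))=\PGL_2(p)$ acting by conjugation, so a hypothetical graph would read $\pi_{ij}(G)=\{(g,hgh^{-1}):g\in\PSL_2(p)\}$ for a fixed $h\in\PGL_2(p)$, and I derive a contradiction. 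Write $A=T(\lambda_i-v)$ and $A'=T(\lambda_j-v)$; by \eqref{eq:a*b'}, $u:=AT(\lambda_i-v')^{-1}=A'T(\lambda_j-v')^{-1}=(\begin{smallmatrix}1&v'-v\\0&1\end{smallmatrix})$ is a nontrivial unipotent (since $0<|v'-v|<p$), and $T(\lambda_i-v')^{-1}=A^{-1}u$, $T(\lambda_j-v')^{-1}=A'^{-1}u$.

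The element $ab^{-1}\in S^2S^{-2}$ has $(i,j)$-coordinates $(u,u)$, so $h$ centralises $u$; as $C_{\PGL_2(p)}(u)$ is the unipotent radical $\mathcal U=\{(\begin{smallmatrix}1&t\\0&1\end{smallmatrix}):t\in\mathbb F_p\}$ of the Borel fixing $\infty$, we get $h=(\begin{smallmatrix}1&s\\0&1\end{smallmatrix})$ for some $s$. The element $a^2b^{-1}a^{-1}\in S^2S^{-2}$ has $(i,j)$-coordinates $(AuA^{-1},A'uA'^{-1})$, so the graph relation forces $A^{-1}h^{-1}A'\in C_{\PGL_2(p)}(u)=\mathcal U$; a direct $2\times2$ computation gives $A^{-1}h^{-1}A'=(\begin{smallmatrix}1&0\\ s-(\lambda_j-\lambda_i)&1\end{smallmatrix})$, which lies in $\mathcal U$ only if $s=\lambda_j-\lambda_i$. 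Finally $a^3b^{-1}a^{-2}\in S^3S^{-3}$ (it is $g_1g_2g_3g_4^{-1}g_5^{-1}g_6^{-1}$ with $(g_1,\dots,g_6)=(a,a,a,b,a,a)$) -- a reduced word of length $6$, which is precisely where one needs the cube rather than the square of $S$, the latter already sufficing in Lemma~\ref{l:gen} -- has $(i,j)$-coordinates $(A^2uA^{-2},A'^2uA'^{-2})$, forcing $A^{-2}h^{-1}A'^2\in\mathcal U$; but with $h=(\begin{smallmatrix}1&\lambda_j-\lambda_i\\0&1\end{smallmatrix})$ one computes that the lower-left entry of $A^{-2}h^{-1}A'^2$ equals $\lambda_i-\lambda_j\neq0$, a contradiction. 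Hence $\pi_{ij}(G)=L_i\times L_j$ for all pairs, and $G=L_1\times\cdots\times L_k$.

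The main obstacle is twofold. First, one must invoke the structure theory of subdirect products of simple groups correctly to pass from the pairwise statement to the global one; this is classical but must be stated with care. Second, and the real crux, is the $\PGL_2(p)$ computation: one has to recognise that every word of length $\le4$ -- in particular everything in $S^2S^{-2}$ -- is compatible with $h$ being the translation $(\begin{smallmatrix}1&\lambda_j-\lambda_i\\0&1\end{smallmatrix})$, so that a length-$6$ word is genuinely required (this is the product-group analogue of the remark that $\langle SS^{-1}\rangle$ is only cyclic), and then carry out the computation showing that for $a^3b^{-1}a^{-2}$ the obstructing matrix entry is exactly $\lambda_i-\lambda_j$, which is nonzero precisely by the hypothesis $\lambda_i\neq\lambda_j$.
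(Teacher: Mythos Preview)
Your proof is correct and follows essentially the same route as the paper: reduce to $k=2$ via Goursat, rule out $p_i\neq p_j$ by cardinality, and in the equal-prime case use the three words $ab^{-1}$, $a^2b^{-1}a^{-1}$, $a^3b^{-1}a^{-2}$ (which are exactly the paper's choices $a=(v,v,v,v,v,v')$, $(v,v,v,v,v',v)$, $(v,v,v,v',v,v)$, giving $u$, $AuA^{-1}$, $A^2uA^{-2}$) to force the intertwiner $h$ into a contradiction. The only cosmetic difference is that you extract the constraints on $h$ by direct $2\times2$ matrix computation of $A^{-m}h^{-1}A'^m$, whereas the paper phrases the same constraints via the fixed points of these unipotents on $\mathbb P^1$; your verification that $\langle S^2S^{-2}\rangle$ sits inside the diagonal for $h=\bigl(\begin{smallmatrix}1&\lambda_j-\lambda_i\\0&1\end{smallmatrix}\bigr)$ is precisely the paper's remark that one genuinely needs $S^3S^{-3}$ here.
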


We note that, in the case $k=2$ and $p_1=p_2=p$, it is not true that $\langle S^2S^{-2} \rangle=  \PSL_2(p)^2$; that is why we consider $S^3S^{-3}$.

\begin{proof}
The case $k =1$ is covered by the previous lemma.

For $k = 2$, we use the following corollary of Goursat's lemma \cite[Section 4.3]{Sc}:  
if $\Gamma_1, \Gamma_2$ are non-abelian finite simple groups, and $H$ is a subgroup of $\Gamma_1 \times \Gamma_2$ that projects onto  both $\Gamma_1$ and $ \Gamma_2$, then either $H = \Gamma_1 \times \Gamma_2$, or $H$ is a diagonal; that is, there exists an isomorphism $\phi\colon \Gamma_1 \to \Gamma_2$ such that $H =  \{(g, \phi(g)) \, : \, g \in \Gamma_1\}$.   We take $\Gamma_j = \PSL_2(p_j)$. By Lemma~\ref{l:gen}, the group $H$ generated by $S^3S^{-3}$ surjects onto each of the factors. If $H =  \PSL_2(p_1) \times  \PSL_2(p_2)$, we are done. Otherwise, $H$ is a diagonal. In particular, $p_1 = p_2 = p$.

Observe that all the automorphisms of $\PSL_2(p)$ have the form $g \mapsto h g h^{-1}$ for $h \in \operatorname{PGL}_2(p)$. Assume that for every $a = (a_1, \cdots, a_6) \in \{v,v'\}^6$
\[\begin{split} 
& T(\lambda_2 - a_1) T(\lambda_2-a_2) T(\lambda_2 - a_3) T(\lambda_2 - a_4)^{-1}   T(\lambda_2 - a_5)^{-1}  T(\lambda_2 - a_6)^{-1}  \\
&\quad= 
h T(\lambda_1 - a_1) T(\lambda_1-a_2) T(\lambda_1 - a_3) T(\lambda_1 - a_4)^{-1}   T(\lambda_1 - a_5)^{-1}  T(\lambda_1 - a_6)^{-1}h^{-1}. \end{split}\]
(This is an equality in $\PSL_2(p)$.) Taking $a = (v,v,v,v,v,v')$, we have (in $\PSL_2(p)$):
\[  \left( \begin{array}{cc} 1 & v'-v \\ 0 & 1 \end{array} \right)  = h  \left( \begin{array}{cc} 1 & v'-v \\ 0 & 1 \end{array} \right)  h^{-1}~;\]
it is clear that the equality also holds in $\SL_2(p)$, whence 
\[ h =  \left( \begin{array}{cc} 1 & x \\ 0 & 1 \end{array} \right)  \]
for some $x \in \mathbb F_p$.  Now, for $a = (v,v,v,v,v',v)$
\[T(\lambda  - a_1) T(\lambda -a_2) T(\lambda  - a_3) T(\lambda  - a_4)^{-1}   T(\lambda  - a_5)^{-1}  T(\lambda  - a_6)^{-1} = T(\lambda-v)  \left( \begin{array}{cc} 1 & v'-v \\ 0 & 1 \end{array} \right) T(\lambda-v)^{-1}\]
fixes only $\lambda-v = [\lambda-v:1]$, hence $h$ must map $\lambda_1-v$ to $\lambda_2-v$, i.e.\ $x = \lambda_2 - \lambda_1$. On the other hand, for $a = (v,v,v,v',v,v)$
\[T(\lambda  - a_1) T(\lambda -a_2) T(\lambda  - a_3) T(\lambda  - a_4)^{-1}   T(\lambda  - a_5)^{-1}  T(\lambda  - a_6)^{-1} \]
fixes only the point $(T(\lambda-v)) (\lambda-v) = \lambda - v - 1/(\lambda-v)$, whence 
\[ x = (\lambda_2-v - 1/(\lambda_2-v)) - (\lambda_1-v - 1/(\lambda_1-v))~.\]
 Thus $\lambda_1 = \lambda_2$, in contradiction with the assumption.

Finally for $k \geq 3$, we observe that by what we have just proved the projection of the group $H$ generated by $S^3S^{-3}$ to any pair of coordinates $(i,j)$ is the full group $\PSL_2(p_i) \times \PSL_2(p_j)$. This implies, by another corollary of Goursat's lemma, that 
\[ H = \PSL_2(p_1) \times \PSL_2(p_2) \times \cdots \times \PSL_2(p_k)~.\qedhere\]
\end{proof}

\section{Mixing of the Markov chain on $\PSL_2(p)$}\label{s:mix}
\begin{proof}[Proof of Proposition~\ref{prop:mix}] Recall the following definition. If $G$ is a finite group, the diameter $\Delta(G)$ is the maximal diameter of the Cayley graph of $G$, i.e.\ the maximum of
\[ 
    \max_{g \in G} \min \{ k  \geq 0 \, : \, \exists s_1, \ldots, s_k \in S~, \, g = s_1 \cdots s_k\} 
\]
over all the symmetric generating sets $S$ of $G$ (with the convention that the empty product is equal to $1$).
According to a result of Helfgott \cite[Main Theorem]{H}, $\Delta(\operatorname{SL}_2(p)) \leq C \log^C p$, 
thus the same is also true for $\PSL_2(p)$. According to a result of Babai and Seress \cite[Lemma 5.4]{BS}, we have (using the fact that $\PSL_2(p)$ is simple, non-abelian for $p \geq 5$)
\[ \Delta = \Delta(\PSL_2(p_1) \times \PSL_2(p_2) \times \cdots \times \PSL_2(p_k)) \leq 20 k^3 \max_j \Delta(\operatorname{SL}_2(p_j))^2 \leq  20 C^2 k^3 \log^{2C} p_1~. \]
We consider four Markov chains on $\PSL_2(p_1) \times \PSL_2(p_2) \times \cdots \times \PSL_2(p_k)$. All the four have transitions from 
$h$ to $gh$,  where: $g = (T(\lambda_1 - V), \cdots, T(\lambda_k - V))$, where $V$  is chosen according to the distribution of $V_1$ (Chain 1);  $g = g_1 g_2 g_3$, where $g_j$ are chosen   independently from the same distribution as for the first chain (Chain 2); $g = g_1^{-1} g_2^{-1} g_3^{-1} g_4 g_5 g_6$,  where $g_j$ are chosen   independently from the same distribution (Chain 3); and $g = g_1^{-1} g_2^{-1} g_3^{-1} g_4 g_5 g_6$, where $g_j$ are chosen independently and uniformly in
\[ S = \{(T(\lambda_1-v), \cdots, T(\lambda_k-v)),  (T(\lambda_1-v'), \cdots, T(\lambda_k-v'))\}~, \]
where $v,v'$ are in the support of $V_1$ achieving the bound (\ref{eq:atombd}) (Chain 4). Denote the transition matrices of the four chains by $\Pi_j$, $1 \leq j \leq 4$. Observe that   $\Pi_2 = \Pi_1^3$ and $\Pi_3 = (\Pi_1^3)^t \Pi_1^3$; in particular, Chain 3 is reversible, and so is Chain 4.

Similarly to \cite{H}, we make use of the results of Diaconis and Saloff-Coste \cite[Section 3]{DSC}, that imply that the eigenvalues of $\Pi_4$, except for the top one, are bounded in absolute value by $1 - 1/(64 \Delta^2)$, i.e.\ 
\begin{equation}\label{eq:dsc}
\| \Pi_4 w \| \leq (1 - 1/(64 \Delta^2)) \|w\|
\end{equation}
 for any vector $w$ perpendicular to constants.

Now represent $\Pi_3 = \alpha \Pi_4 + (1-\alpha) \Pi'$, where $\alpha = \min(\mathbb P(V = v), \mathbb P(V = v'))$, and $\Pi'$ is another symmetric stochastic matrix (the definition of $\alpha$ ensures that the entries of $\Pi'$ are non-negative).  Let $w$ be any unit vector perpendicular to constants. Then
\begin{align*}
\| \Pi_3 w \| &\le \alpha\|\Pi_4 w\| +  (1-\alpha)\|\Pi'w\| \\ 
&\le \alpha(1-\frac{1}{64\Delta^2}) + 1-\alpha  = 1- \frac{\alpha}{64\Delta^2} \\ &\le \exp(- \alpha/ (64 \Delta^2))~.
\end{align*}
Consequently,  
\[ \| \Pi_2 w\| = \sqrt{\langle \Pi_2 w, \Pi_2 w \rangle} \leq \sqrt{\| \Pi_2^t \Pi_2 w \|} \leq \exp(- \alpha/ (128 \Delta^2))~,\]
and, iterating this inequality,
\[ \| \Pi_2^n w \| \leq \exp(- \alpha n / (128 \Delta^2))~, \]
and finally,
\[ \| \Pi_1^n w \| \leq \|\Pi_1\|^2  \| \Pi_2^{\lfloor n/3 \rfloor} w \| \leq \exp(- \alpha \lfloor n/3\rfloor / (128  \Delta^2))  
\leq  \exp(- \alpha n / (500  \Delta^2)) \]
(for $n \geq 9$). Thus
\[d_k(n) \leq C_1 N_p^{k/2} \exp(- \frac{\alpha n}{500 \Delta^2}) \leq 
C_2 p^{3k/2} \exp(-  \frac{n}{C_2 k^6 \log^{4C} p_1}) \leq \exp(-  \frac{n}{2C_2 k^6 \log^{4C} p_1}) \]
for $n \geq C_3k^7 \log^{4C+1} p_1$, with the constants depending only on $\alpha$.
\end{proof}

\begin{proof}[Proof of Corollary~\ref{cor:mix}]
Since for a uniform $g\in \PSL_2(p)$ the probability that $g_{11} \equiv 0\mod p$ is $1/(p+1)$, Proposition~\ref{prop:mix} implies that for $p \geq 5$, $n \geq C k^7 \log^C p$
\[
\begin{split}
&\left| \mathbb E \# ( Z(P_n, p))_k - 1  \right| \leq\left| \mathbb E  \# (Z(P_n, p))_k- \frac{(p)_k}{(p+1)^k}  \right|  + \left|\frac{(p)_k}{(p+1)^k} -1\right| \\
&\qquad\leq 
 \mathbb E \sum_{\substack{\lambda_1, \cdots, \lambda_k \\ \text{distinct}}} \left| \mathbb P \left\{ \forall 1 \leq j \leq k \, : \, 
P_n(\lambda_j) \equiv 0 \mod p \right\}  - \frac{1}{(p+1)^k} \right| + \frac{Ck^2}{p} \\
&\qquad \leq p^k \exp\left(-\frac{n} {C k^6 \log^{C}p}\right)  + \frac{Ck^2}{p} \leq \exp\left(-\frac{n} {C' k^6 \log^{C}p}\right)+ \frac{Ck^2}{p}~.
\end{split}
\]
Moreover, for $p >p' \geq 5$  
\[\begin{split} &\left|  \mathbb E \#  (Z(P_n, p))_k   \# (Z(P_n, p'))_k - 1  \right| \\
&\quad\leq  \sum_{\substack{\lambda_1, \cdots, \lambda_k \in \mathbb F_p\\ \text{distinct}}} 
\sum_{\substack{\lambda_1', \cdots, \lambda_k' \in \mathbb F_{p'} \\ \text{distinct}}} 
\left| \mathbb P \left\{ \forall 1 \leq j \leq k \, : \, 
P_n(\lambda_j) \equiv 0 \mod p,  P_n(\lambda_j') \equiv 0 \mod p'\right\} -  \frac{1}{(p+1)^{k}(p'+1)^k} \right| \\
&\qquad\qquad+ \frac{Ck^2}{2p'}\\
&\quad\leq p^{2k} \exp(- \frac{n}{C k^{6} \log^C p}) + \frac{Ck^2}{p'} \leq \exp(- \frac{n}{C' k^{6} \log^C p}) + \frac{Ck^2}{p'}~,
\end{split}\]
whence (by the previous estimate)  the same bound holds for $\left|  \mathbb E \#  (Z(P_n, p))_k   \big(\# (Z(P_n, p'))_k - 1\big)\right|$.

Further, let $A(k) = \sum_{\ell=0}^k \binom{k}{\ell}^2 \, \ell! \leq C k^k$, then, using the identity $(N)_k^2 = \sum_{\ell = 0}^k \binom{k}{\ell}^2 \ell! (N)_{2k-\ell}$, where now $(N)_k$ stands for the falling factorial $(N)_k = n! / (N-k)!$, we have for $n \geq C' k^8 \log^C p$:
\[\begin{split}
&\left| \mathbb E  ( \# (Z(P_n, p))_k)^2- A(k) \right| \\
&\qquad\leq \sum_{\ell=0}^k \binom{k}{\ell}^2 \, \ell! \, \left| \mathbb E  \# (Z(P_n, p))_{2k-\ell} - 1 \right|  \\
&\qquad \leq A(k) \left( \exp(-\frac{n} {C' k^6 \log^{C}p})+ \frac{Ck^2}{p} \right) 
\leq \exp(-\frac{n} {C'' k^6 \log^{C}p}) + \frac{C''k^{k+2}}{p}~.
\end{split}\]
As usual, the  Chebyshev function is defined as
\[ 
    \vartheta(x) = \sum_{\text{prime $p \leq x$}} \log p = (1+o(1)) x~.
\]
Then
\[\begin{split}
&  \mathbb E \frac{1}{(\vartheta(2x) - \vartheta(x))^2}  \left| \sum_{x< p \leq 2x} \log p \times \big( \# ( Z(P_n, p))_k - 1\big)  \right|^2 \\
&\qquad\leq \frac{C_1 \log^2 x}{x^2}  \Big\{
\sum_{x< p \leq 2x} \mathbb E |\# (Z(P_n, p))_k - 1|^2 \\
&\qquad\qquad+ 2 \sum_{x< p' < p  \leq 2x} \mathbb E (\# (Z(P_n, p))_k - 1)( \# (Z(P_n, p'))_k - 1) 
\Big\} \\
&\qquad\leq \frac{C_1 \log^2 x}{x^2}  \Big\{
\sum_{x< p \leq 2x} \left[ A(k) +  \left| \mathbb E (\# ( Z(P_n, p))_k)^2 - A(k) \right| + 2   \left| \mathbb E \#  (Z(P_n, p))_k - 1 \right| + 1  \right] \\
&\qquad\qquad+ 2 \sum_{x< p' < p  \leq 2x}\left|  \mathbb E \# (Z(P_n, p))_k  \big( \# (Z(P_n, p'))_k - 1\big) \right|
\Big\}  \\
&\qquad\qquad+ \frac{2C_1 \log x}{x} \sum_{x < p \leq 2x} \left| \mathbb E \# (Z(P_n, p))_k -1\right|~. 
\end{split}\]
For $n \geq C'' k^8 \log^C x$ and $x \geq \max(5, k!)$, we can use the bounds obtained above to continue the inequality as follows:
\[ \begin{split} 
&\qquad\leq \frac{C_2 \log^2 x}{x^2}  \Big\{
\sum_{x< p \leq 2x} \left[ k^k +  \exp(-\frac{n} {C'' k^6 \log^{C}p})  \right] +  \sum_{x< p' < p  \leq 2x}\left[  \exp(- \frac{n}{C' k^{6} \log^C p}) + \frac{Ck^2}{p'}  \right] 
\Big\}  \\ &\qquad\qquad  + \frac{2C_1 \log x}{x} \sum_{x < p \leq 2x} \left[  \exp\left(-\frac{n} {C' k^6 \log^{C}p}\right)+ \frac{Ck^2}{p} \right] \\
&\qquad\leq  C_3 \left[ \frac{ \log x}{x} \,  k^k +\exp\left(-\frac{n} {C'' k^6 \log^{C}x}\right)\right]~,
\end{split}\]
which implies, in conjunction with the Chebyshev inequality, the claimed bound.
\end{proof}

\section{Improved bound on the mixing}\label{s:upgr}

\begin{proof}[Proof of Lemma~\ref{cor:mix-upgr}]
According to a result of Breuillard--Gamburd \cite{BG}, there exists a set of primes $\mathcal P$ such that for large $x$
\[ \# (\text{primes} \setminus \mathcal P) \cap (x, 2x] \leq \sqrt x~,\]
and, for any $p \in \mathcal P$ and any generating (multi-)set $g_1, \cdots, g_m$ of $\PSL_2(p)$, the Markov chain with transfer probabilities $\mathbb P (g \to g_j^{\pm 1}g) = 1/(2m)$ has spectral gap bounded below by $\eta(m) > 0$, depending on $m$ but not on $p$ and the generators $g_j$. Invoking the result of Golsefidy--Srinivas \cite{GS}, we obtain that the spectral gap of the corresponding Markov chain on $\PSL_{2}(p_1) \times \cdots \times \PSL_{2}(p_k)$,
$p_j \in \mathcal P \cap (x, 2x]$, is lower-bounded by $\eta'(m,k)$, depending on $m$ and $k$. We apply this bound with $m=64$ and $g_j$  being one of the two largest atoms of the distribution of $V_1$.

Using the improved bound on the spectral gap in place of (\ref{eq:dsc}), we obtain the following improved version of Proposition~\ref{prop:mix}:  for any $k$, any prime $p_1 \geq \cdots \geq p_k \geq 5$ lying in $\mathcal P$, any $n \geq C_k \log p_1$, and any $\lambda_1 \in \mathbb F_{p_1}, \ldots, \lambda_k \in \mathbb F_{p_k}$ such that $\lambda_i \neq \lambda_{j}$ whenever $p_i = p_j$ and $i \neq j$, we have 
\begin{equation}\label{eq:impr-prob}
\begin{split} 
  \sum_{(g_1, \ldots, g_k) \in \PSL_2(p_1)\times \cdots \times \PSL_2(p_k)} \left| \mathbb P \left\{ \forall 1 \leq j \leq k \, : \, \Phi_n(\lambda_j) = g_j \right\} - \prod_{j=1}^k N_{p_j}^{-1} \right| 
    \leq \exp(-c_k n)~.
\end{split}
\end{equation}
Repeating the arguments in the proof of Corollary~\ref{cor:mix}, we deduce:
\begin{equation}  \mathbb P \left\{ \left| \frac{1}{x}  \sum_{p \in  \mathcal P \cap (x, 2x]} \log p \times  \left(  \# ( Z(P_n, p))_k  - 1  \right) \right| \geq \frac{1}{3} \right\}  
\leq  C_k \left[ \exp(-c_k n)+\frac{\log x}{x} \right] ~. \end{equation}
On the other hand, $P_n$ is a monic polynomial of degree $n$, hence $| \# (Z(P_n, p))_k - 1| \leq (n)_k \leq n^k$, and
\begin{equation}    \frac{1}{x}  \sum_{p \in (x, 2x] \setminus \mathcal P} \log p \times \left| \# ( Z(P_n, p))_k  - 1 \right| \leq \frac{C n^k}{\sqrt x} \leq \frac16 \end{equation}
provided that $x \geq C^2 n^{2k}$.
This concludes the proof.
\end{proof}

\section{Ruling out multiplicity}\label{s:mult}

The following argument mimics \cite[Section 8]{BV}.
\begin{proof}[Proof of Lemma~\ref{l:mult}] 
It is sufficient to estimate the probability that $P_n$ is an $m$-th power separately for each $2 \leq m \leq \sqrt{n}$; then we use the union bound. If $P_n = Q^m$, each value $P_n(\lambda)$, $\lambda \in \mathbb Z$, is an $m$-th power modulo any $p$. Choose a prime $p$  so that $m | p- 1$ and let $A \subset \mathbb F_{p}$ be the set of residues which are $m$-th powers modulo  $p$. Then $\# A =\frac{p-1}{m}+1 \leq \frac{p+1}{2}$, thus by Proposition~\ref{prop:mix}
\[ \mathbb{P} \left\{ \text{$P_n$ is an $m$-th power} \right\} 
\leq \mathbb P \left\{ P_n(\lambda) \in A~, \,\, 1\leq \lambda\leq k\right\} 
\leq \left( \frac{p+1}{2p} \right)^k + \exp(- \frac{n}{C k^6 \log^C p})\]
for $n \geq C k^7 \log^C p$. If $k = \lfloor n^c \rfloor$, where $c>0$ is a small positive number, we can ensure that $x < p \leq 2x$ for $x = \exp((n/k^7)^{1/(2C)})$, and then both terms in the right-hand side are bounded by $\exp(-n^c)$ for large $n$ (uniformly in $m$).
\end{proof}

\noindent Here is another version of the same argument, using slightly more advanced tools and yielding a better (exponential rather than stretched-exponential)  bound on the probablity.
\begin{proof}[Proof of Lemma~\ref{l:mult-upgr}] 
If $P_n$ is an $m$-th power, $P_n(0)$ is an $m$-th power modulo any prime $p$. It is known 
that the matrices $g_1 g_2 g_3^{-1} g_4^{-1}$, where $g_j \in \{ T(v), T(v')\}$, generate a Zariski-dense subgroup of $\SL_2(\mathbb C)$; this also follows from the fact (Lemma~\ref{l:gen} above) that these matrices generate $\SL_2(p)$ for $p$ large enough (in fact, by a result of Lubotzky \cite{Lub} it suffices to assume this property for one $p \geq 5$).

Hence, by the super-strong approximation theorem for $\SL_2$ of Bourgain--Gamburd--Sarnak \cite[Theorem 1.2]{BGS} there exists $x_0\in \mathbb Z$ and $\eta > 0$ such that for square-free $q$ coprime to $x_0$ the reduction of the corresponding random walk on $\SL_2(\mathbb Z / q 
\mathbb Z)$ has a spectral gap lower-bounded by $\eta$. Let $\frac{1}{10 x_0!} e^{\frac{\eta}{100}n} \leq q \leq e^{\frac{\eta}{100}n}$ be a  square-free integer not divisible by primes $\leq x_0$, then (for $c'$ possibly depending on $\eta$)
\[\begin{split} \mathbb{P} \left\{ \text{$P_n$ is an $m$-th power} \right\} 
&\leq \mathbb P \left\{ \text{$P_n(0)$ is a square $\operatorname{mod} q$}  \right\}\\
&\leq   \prod_{p | q} \frac{p+2}{2(p+1)} + \exp(- c \eta n) \leq \exp(-c'n)~. \qedhere
\end{split}\]

\end{proof}

\section{Matrices with constant diagonal entries}\label{s:dyson}

\begin{proof}[Proof of Theorem~\ref{thm:2}] We may assume without loss of generality that $a=0$: for an arbitrary $a$, 
$\det (\tilde H_n - \lambda \mathbbm 1) = \det ((\tilde H_n - a \mathbbm 1) - (\lambda - a) \mathbbm 1)$, where the right-hand side    is the characteristic polynomial of a shifted matrix with  $V_1\equiv 0$ evaluated at $\lambda-a$.
	
 Then the characteristic polynomial $P_n$ of  $\tilde H_n$ obeys the recurrence
\begin{equation}\label{eq:d-rec}  P_n(\lambda) = \lambda   P_{n-1}(\lambda) - W_{n-1}^2   P_{n-2}(\lambda)~.\end{equation}
In particular,  $P_n(0) = 0$ for odd $n$ (as we already showed in Claim~\ref{cl:d-1}), whereas for even $n$,  $P_n(0) = (-1)^{n/2} \prod_{j=1}^{n/2} W_{2j-1}^2$, whence deterministically $P_n(0) \neq 0 \mod p$ for  $p$ sufficiently large ($p \geq p_0$, where $p_0$ does not depend on $n$).

To count the zeros $\lambda \neq 0$, we first introduce some notation. For a positive integer $k$, let 
\begin{equation}
\label{eq:number_orbits}
\mathcal O(k) = \sum_{\substack{k_1,k_2\ge 0 \\ k_1+2k_2=k} }\frac{k!}{k_1! 2^{k_2}k_2!}.
\end{equation}
Note that $\mathcal O(k)$ is the number of partitions of a set of size $k$ into some subsets of size $2$ and one additional subset; equivalently, it is the number of $\sigma \in S_k$ with $\sigma^2=1$. Also, if $m$ is a positive integer with  $m\ge k$, then, $\mathcal O(k)$ is the number of orbits of the natural (imprimitive) action of $C_2\wr S_m$ on $(\Omega)_k$, where $\Omega$ is a set of size $2m$.

We prove the following counterpart of Lemma~\ref{cor:mix-upgr}:

\begin{lemma}\label{l:dyson-mix-upgr} For any $k \geq 1$ there exist $C_k, c_k>0$ such that for $c_k x^{\frac{1}{2k}} \geq n \geq C_k \log x$
\begin{equation}\label{eq:cor:mix-upgr} \mathbb P \left\{ \left| \frac{1}{x}  \sum_{x < p \leq 2x} \log p \times \# ( Z(P_n, p) \setminus \{0\})_k  - \mathcal O(k)  \right| \geq \frac{1}{2} \right\}  
\leq  C_k \left[ e^{-c_kn}+\frac{\log x}{x} \right] ~. \end{equation}
\end{lemma}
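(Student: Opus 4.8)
The plan is to mirror the proof of Lemma~\ref{cor:mix-upgr}, replacing the random walk on $\PSL_2(p)$ by the relevant walk on $\SL_2(p)$ (or $\PSL_2(p)$) that governs the transfer matrices of $\tilde H_n$, and keeping careful track of the special role of the eigenvalue $\lambda=0$, which is where Dyson's spike lives. Set $a=0$ as the theorem allows. By the recurrence \eqref{eq:d-rec}, $\binom{P_n(\lambda)}{P_{n-1}(\lambda)} = \tilde T_n(\lambda)\binom{P_{n-1}(\lambda)}{P_{n-2}(\lambda)}$ with $\tilde T_n(\lambda) = \left(\begin{smallmatrix} \lambda & -W_{n-1}^2 \\ 1 & 0\end{smallmatrix}\right)$; conjugating by $\operatorname{diag}(W_{n-1},1)$-type factors one rewrites the product $\tilde \Phi_n(\lambda) = \tilde T_n \cdots \tilde T_1$, for $\lambda \neq 0 \bmod p$, in terms of the matrices $M(\lambda, W) = \left(\begin{smallmatrix} \lambda/|W| & -|W| \\ 1/|W| & 0 \end{smallmatrix}\right)$ or a similar normalization, so that $(\tilde\Phi_n(\lambda))_{11}$, up to an explicit nonzero scalar, equals $P_n(\lambda)$. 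The first step is thus to record this deterministic identity together with the precise scalar, so that $P_n(\lambda)\equiv 0 \bmod p$ is equivalent to the $(1,1)$ entry of a product of $n$ i.i.d.\ random $\SL_2(p)$ (or $\PSL_2(p)$) matrices vanishing.

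Next I would establish the mixing/equidistribution of this walk, exactly as in Section~\ref{s:mix} and Section~\ref{s:upgr} but with the new step distribution. The generators now depend on $|W_1|$ rather than on a difference of diagonal entries; one picks two values $w \neq w'$ in the support of $|W_1|$ and forms the analogue of $S$, and one must prove the group-generation statements (counterparts of Lemma~\ref{l:gen} and Lemma~\ref{l:gen_prod}): that $\langle S^3 S^{-3}\rangle = \prod_j \PSL_2(p_j)$ for $p_j$ large, uniformly in the $\lambda_j$ ranging over $\mathbb F_{p_j}^\times$ with the usual distinctness condition. This is where one checks that there is no hidden soluble obstruction away from $\lambda=0$ --- the point being that Dyson's spike is confined to $\lambda=0$, and for $\lambda\neq 0$ the transfer matrices are ``generic''. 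Given this, the Breuillard--Gamburd \cite{BG} and Golsefidy--Srinivas \cite{GS} inputs give, for all $p_1\ge\cdots\ge p_k$ in a density-one set of primes $\mathcal P$ and $n \ge C_k \log p_1$, the equidistribution bound $\sum_{(g_1,\ldots,g_k)} \bigl| \mathbb P\{\forall j:\ \tilde\Phi_n(\lambda_j)=g_j\} - \prod_j N_{p_j}^{-1}\bigr| \le \exp(-c_k n)$, the exact analogue of \eqref{eq:impr-prob}.

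With equidistribution in hand, the count of $\#(Z(P_n,p)\setminus\{0\})_k$ follows a first-and-second-moment computation as in the proof of Corollary~\ref{cor:mix}, the only change being that for a uniform tuple $(g_1,\ldots,g_k) \in \PSL_2(p)^k$ with distinct $\lambda_j\in\mathbb F_p^\times$ the expected number of $k$-tuples of distinct nonzero roots is $(p-1)_k/(p+1)^k$ rather than being governed by $\mathcal O(k)$ --- wait, this is the subtlety: the symmetry $\lambda \leftrightarrow -\lambda$ of the spectrum (Claim~\ref{cl:d-1}) forces roots to come in pairs $\pm\lambda$, so the constant one must see is $\mathcal O(k)$, exactly the number of orbits of $C_2\wr S_m$ on $(\Omega)_k$ defined in \eqref{eq:number_orbits}. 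Concretely, $P_n(\lambda)$ and $P_n(-\lambda)$ are not independent: $P_n(-\lambda) = (-1)^n P_n(\lambda)$ as polynomials (by the $\operatorname{diag}(1,-1,1,-1,\dots)$ conjugation), so $\{\lambda \text{ is a root}\} \Leftrightarrow \{-\lambda \text{ is a root}\}$, and the random variable $\#(Z(P_n,p)\setminus\{0\})$ is $2$ times the number of unordered pairs $\{\lambda,-\lambda\}$ of roots. One must therefore run the moment computation on the level of these pairs (equivalently, on the quotient $\mathbb F_p^\times/\{\pm1\}$), where the walk does equidistribute independently across distinct pairs, and then translate back: the number of ways to choose an ordered $k$-tuple of distinct nonzero roots, given that roots come in $\pm$ pairs and each pair behaves like an independent coin, has expectation $\to \mathcal O(k)$ by exactly the combinatorial identity in \eqref{eq:number_orbits}. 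Averaging over $p\in\mathcal P\cap(x,2x]$ with Chebyshev's inequality, and bounding the contribution of the exceptional primes $(x,2x]\setminus\mathcal P$ and of $\lambda=0$ trivially using $\#(Z(P_n,p)\setminus\{0\})_k \le n^k$ and $\#\bigl((x,2x]\setminus\mathcal P\bigr)\le\sqrt x$ (valid once $x \ge C^2 n^{2k}$, i.e.\ $n \le c_k x^{1/(2k)}$), yields \eqref{eq:cor:mix-upgr}.

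The main obstacle I anticipate is the bookkeeping around the $\pm$ symmetry: one has to set up the right ``paired'' version of the equidistribution statement so that the constant $\mathcal O(k)$ emerges cleanly, i.e.\ to argue that conditionally on which unordered pairs $\{\lambda,-\lambda\}$ one looks at, the events $\{P_n(\lambda)\equiv 0\}$ are asymptotically independent fair coins --- this requires that the relevant random walk (on products of $\PSL_2(p)$ indexed by pairs, or a suitable quotient group) still equidistributes, which in turn is why the generation lemmas must be proved for $\lambda_j \in \mathbb F_{p_j}^\times$ with the distinctness condition interpreted modulo sign. A secondary technical point is verifying the generation statements themselves for the $|W|$-dependent generators; I expect the argument of Lemma~\ref{l:gen} to go through essentially verbatim, using that a suitable product of generators is a nontrivial unipotent (of order $p$) and another has a different unique fixed point on $\mathbb P^1(\mathbb F_p)$, ruling out all maximal subgroups in Dickson's list, but one should double-check that no degeneracy occurs at $\lambda=0$ (it does, consistently with the theorem only claiming $\lambda\ne 0$ information).
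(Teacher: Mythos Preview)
Your outline matches the paper's proof closely: normalize the transfer matrices to land in $\PSL_2(p)$, prove the generation lemmas (Claims~\ref{cl:alg-dyson} and~\ref{cl:dyson-2} in the paper) with the distinctness condition $\lambda_i\neq\pm\lambda_j$, invoke \cite{BG,GS} for the spectral gap, and then run the moment computation, where the paper obtains $\mathcal O(k)$ by decomposing the sum over $(\lambda_1,\dots,\lambda_k)\in(\mathbb F_p^\times)_k$ according to the involution $\sigma\in S_k$ recording which coordinates satisfy $\lambda_i=-\lambda_j$ (equation~\eqref{eq:estimate_mean}); this is exactly your ``paired'' reformulation.

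One point where your expectation is off: the generation argument does \emph{not} go through verbatim from Lemma~\ref{l:gen}. The element $\tilde T(\lambda,w)\tilde T(\lambda,w')^{-1}=\left(\begin{smallmatrix} w/w' & * \\ 0 & w'/w\end{smallmatrix}\right)$ is not unipotent --- it has order equal to the multiplicative order of $(w/w')^2\bmod p$, a divisor of $p-1$, and it fixes \emph{two} points on $\mathbb P^1$ ($\infty$ and $\lambda$) rather than one. So the paper's Claim~\ref{cl:alg-dyson} argues instead that this order can be made $\geq 6$ for $p$ large, ruling out the small maximal subgroups, and then produces further elements of $S^3S^{-3}$ that move both $\infty$ and $\lambda$ to rule out stabilizers of points and pairs. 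The Goursat step (Claim~\ref{cl:dyson-2}) is likewise more delicate than in Lemma~\ref{l:gen_prod}: one must show that no $h\in\PGL_2(p)$ conjugates the $\lambda_1$-walk to the $\lambda_2$-walk, and the computation there genuinely uses $\lambda_1\neq\pm\lambda_2$ (ending with $\lambda_1^2=\lambda_2^2$ as the contradiction). These are not obstacles to your strategy, but the details require fresh work rather than a copy of Section~\ref{s:alg}.
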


\begin{proof}
Let $\lambda_1, \cdots, \lambda_k \in \mathbb F_p \smallsetminus \{0\}$  be such that $\lambda_i \neq \pm \lambda_j$, $i \neq j$. Consider the transfer matrices, which now take the form
\[ \Phi_n(\lambda) = T(\lambda, |W_{n-1}|) \cdots T(\lambda, |W_{1}|) T(\lambda, |W_{0}|)~, \quad T(\lambda,w) = \left( \begin{array}{cc} \lambda & -w^2 \\ 1 & 0 \end{array}\right) \]
(and we can set $W_0 = 1$). Similarly to the proof of Lemma \ref{lem:computecharpolyfromtransfer}, we have $P_n(\lambda)=(\Phi_n(\lambda))_{11}$.
Note that these matrices are not unimodular, hence we consider 
\begin{equation}\label{eq:normalise1} \tilde T(\lambda, w) = \left( \begin{array}{cc} \lambda/w & -w \\ 1/w & 0 \end{array}\right) \in \PSL_2(p)~, \quad
\tilde\Phi_n(\lambda) = \tilde T(\lambda, |W_{n-1}|) \cdots \tilde T(\lambda, |W_{1}|) \tilde T(\lambda, |W_{0}|)~.\end{equation}

Now we prove two algebraic claims. The first one is a counterpart of Lemma~\ref{l:gen}.
\begin{cl}\label{cl:alg-dyson} Let $w,w' \neq 0$, $w\neq \pm w'$. Then for    $p \geq C(w,w')$ and $\lambda \in \mathbb F_p \setminus \{0\}$ we have $\langle S^3S^{-3}\rangle = \langle S^{-3}S^{3}\rangle  = \PSL_2(p)$, where $S=\{\tilde T(\lambda, w),   \tilde T(\lambda, w')\}$. 
\end{cl}

\begin{proof} 
First, we record part of one cycle of the projective action of $\tilde T(\lambda, w)$:
\begin{equation}
\label{eq:cycle}
\tilde T(\lambda, w) = (\ldots, w^2/\lambda, 0, \infty, \lambda, \lambda - w^2/\lambda,\ldots)  
\end{equation}
Next, note that 
\[
 \tilde T(\lambda, w)  \tilde T(\lambda, w')^{-1} =  \left( \begin{array}{cc}  w/w' & \lambda (w'/w - w/w') \\ 0 & w'/w \end{array}\right)\in S^3S^{-3}
\]
is of order equal to the order of $w^2/(w')^2\mod p$. Since $p$ is large, we may ensure that this order is at least $6$, and in particular the maximal subgroups (2), (4), (5), and (6) are ruled out (we use the labelling of the maximal subgroups as in the proof of Lemma~\ref{l:gen}). Note that $\tilde T(\lambda, w)  \tilde T(\lambda, w')^{-1}$ fixes only the points $\infty = [1:0]$ and $\lambda = [\lambda:1]$.  For $w_i\in \{w,w'\}$, by \eqref{eq:cycle} we have
\begin{equation}\label{eq:appltolam}  \begin{split}
&S^3S^{-3} \ni \tilde T(\lambda, w_1) \tilde T(\lambda, w_2) \tilde T(\lambda, w) \tilde T(\lambda, w')^{-1} \tilde T(\lambda, w')^{-1} \tilde T(\lambda, w')^{-1} ( \lambda) \\
&\qquad=  \tilde T(\lambda, w_1) \tilde T(\lambda, w_2) \tilde T(\lambda, w) \big( (w')^2/\lambda \big).
\end{split}
\end{equation}
If $\tilde T(\lambda, w)((w')^2/\lambda)\neq w^2/\lambda$, then we may choose $w_1=w_2=w$ and the above point is neither $\lambda$ nor $\infty$. If $\tilde T(\lambda, w)((w')^2/\lambda) = w^2/\lambda$, then we may choose  $w_1=w_2=w'$ and the same conclusion holds.
Similarly, 
\begin{equation} \label{eq:appltoinf}\begin{split}
&S^3S^{-3} \ni \tilde T(\lambda, w) \tilde T(\lambda, w) \tilde T(\lambda, w) \tilde T(\lambda, w)^{-1} \tilde T(\lambda, w')^{-1} \tilde T(\lambda, w')^{-1} (\infty) \\
&\qquad=  \tilde T(\lambda, w) \tilde T(\lambda, w) \big( (w')^2/\lambda \big)
\end{split}\end{equation}
is neither $\lambda$ nor $\infty$. Since $\tilde T(\lambda, w)  \tilde T(\lambda, w')^{-1}$ fixes only $\infty = [1:0]$ and $\lambda = [\lambda:1]$, and moreover has all nontrivial orbits of size at least $6$, it follows that $S^3S^{-3}$ does not stabilize neither a point nor a pair of points, so the maximal subgroups (1) and (3) are ruled out. Therefore, $S^3S^{-3}$ generates $\PSL_2(p)$ as claimed, and hence the same is true for $S^{-3}S^{3}$.
\end{proof}

Now we prove the counterpart of Lemma \ref{l:gen_prod}.
\begin{cl}\label{cl:dyson-2}  Let $w,w' \neq 0$, $w\neq \pm w'$. Then for  any  $k\geq 1$, $p_1, \cdots, p_k \geq C(w,w')$, and $\lambda_1\in  \mathbb F_{p_1} \setminus \{0\}, \cdots, \lambda_k\in  \mathbb F_{p_k} \setminus \{0\}$ such that $\lambda_j \neq \pm \lambda_i$ whenever $p_j = p_i$, we have 
\[ \langle S^3S^{-3}\rangle = \langle S^{-3}S^{3}\rangle  = \PSL_2(p_1) \times  \cdots \times \PSL_2(p_k)~,\] 
where 
\[ S=\{(\tilde T(\lambda_1,w),\tilde T(\lambda_2,w), \cdots, \tilde T(\lambda_k,w)), (\tilde T(\lambda_1,w'),\tilde T( \lambda_2,w'), , \cdots, \tilde T(\lambda_k,w'))\}~.\] 
\end{cl}

\begin{proof}
As in the proof of Lemma~\ref{l:gen_prod}, using Goursat's lemma we only need to consider the case $k=2$, $p_1 = p_2$.  Let $\lambda_2 \neq \pm \lambda_1$, and assume by contradiction there exists $h\in \PGL_2(p)$ such that 
\[\begin{split}
&\tilde T(\lambda_2, a_1) \tilde T(\lambda_2, a_2) \tilde T(\lambda_2, a_3)  \tilde T(\lambda_2, a_4)^{-1} \tilde T(\lambda_2, a_5)^{-1} \tilde T(\lambda_2, a_6)^{-1}   \\
&\quad= h \tilde T(\lambda_1, a_1) \tilde T(\lambda_1, a_2) \tilde T(\lambda_1, a_3)  \tilde T(\lambda_1, a_4)^{-1} \tilde T(\lambda_1, a_5)^{-1}\tilde T(\lambda_1, a_6)^{-1}  h^{-1} \end{split}\]
for every $a = (a_1, \cdots, a_6) \in \{w, w'\}^6$. (This is an equality in $\PSL_2(p)$.) Since $g_i=\tilde T(\lambda_i,w)\tilde T(\lambda_i,w')^{-1}$ fixes only $\lambda_i$ and $\infty$, the equality $g_2 = h g_1 h^{-1}$ implies that $h$ maps $\{\lambda_1, \infty\}$ to $\{\lambda_2, \infty\}$. Replacing $g_i$ by $\tilde T(\lambda_i,w)g_i\tilde T(\lambda_i,w)^{-1}$, by \eqref{eq:cycle} we see that $h$ maps  $\{\lambda_1, \lambda_1 - w^2/\lambda_1\}$ to $\{\lambda_2, \lambda_2 - w^2/\lambda_2\}$. This implies that $h$ fixes $\infty$, maps $\lambda_1$ to $\lambda_2$, and $\lambda_1 - w^2/\lambda_1$ to $\lambda_2-w^2/\lambda_2$, 
so
\[
h=\begin{pmatrix}
    a & (\lambda_2 - a^2\lambda_1)/a \\
    0 & a^{-1}
\end{pmatrix}
\]
where $a^2 = \lambda_1/\lambda_2$. The same argument applied to $\tilde T(\lambda_i,w)^2g_i\tilde T(\lambda_i,w)^{-2}$ implies (with \eqref{eq:cycle}) that $h\tilde T(\lambda_1, w)h^{-1}$ acts as $\tilde T(\lambda_2, w)$ on at least three distinct points. Since only the identity in $\PSL_2(p)$ fixes three points, we deduce that $h\tilde T(\lambda_1, w)h^{-1} = \tilde T(\lambda_2, w)$. In particular, $h$ fixes $0$, and so $\lambda_2 = a^2\lambda_1$, which implies $\lambda_1^2=\lambda_2^2$, against the assumption.
\end{proof}

Having Claim~\ref{cl:dyson-2} at hand, we adapt the arguments presented for the model (\ref{eq:defHn}) (proof of Lemma~\ref{cor:mix-upgr}), as follows. First, the arguments parallel to those in the proof of Lemma~\ref{cor:mix-upgr} (Section~\ref{s:upgr}) yield that $\tilde \Phi_n$ satisfy (\ref{eq:impr-prob})  for any $p_1 \geq \cdots \geq p_k \geq 5$ lying in $\mathcal P$, any $n \geq C_k \log p_1$, and any $\lambda_1 \in \mathbb F_{p_1}^\times, \cdots, \lambda_k \in \mathbb F_{p_k}^\times$ such that $\lambda_i \neq \pm \lambda_j$ whenever $p_i = p_j$ and $i \neq j$.

For a prime $p$, denote by $A_{p, k} \subset (\mathbb F_p^\times)_k$ the set of $k$-tuples such that $\lambda_i \neq - \lambda_j$ for any $i, j$. Also denote by $c(\sigma)$ the number of cycles of a permutation $\sigma$. Then we have
\begin{align}
\label{eq:estimate_mean}
\mathbb E\#(Z(P_n,p))_k  &= \sum_{(\lambda_1, \ldots, \lambda_k)\in (\mathbb F_p^\times)_k} \mathbb P(P_n(\lambda_i)=0 \text{ for every } i) \nonumber \\
&= \sum_{\substack{\sigma\in S_k \\ \sigma^2=1}} \sum_{(\mu_1, \ldots, \mu_{c(\sigma)})\in A_{p,c(\sigma)}} \mathbb P(P_n(\mu_i)=0 \text{ for every } i).
\end{align}
Next, for every $j$ 
\begin{align}
\label{eq:probability_one}
\sum_{(\mu_1, \ldots, \mu_j)\in A_{p,j}} \mathbb P(P_n(\mu_i)=0 \text{ for every } i) = \sum_{(\mu_1, \ldots, \mu_j)\in A_{p,j}} \sum_{\substack{g_i\in \PSL_2(p) \\ (g_i)_{11}=0}} \mathbb P(\tilde \Phi_n(\lambda_i)=g_i \text{ for every } i).
\end{align}
By the counterpart of (\ref{eq:impr-prob}) that we have just proved, for $p \in \mathcal P$ and $n \geq C \log p$ the difference between this quantity and $1$ is bounded in absolute value by $C_j ( e^{-c_jn}+\frac{1}{p})$, hence  the difference between  \eqref{eq:estimate_mean} and $\mathcal O(k)$ is bounded in absolute value by $C_k(e^{-c_kn}+\frac{1}{p})$. 
Similarly to Section~\ref{s:upgr}, this implies that
\[ \  \left| \mathbb E \,  \frac{1}{x}  \sum_{p \in \mathcal P \cap (x, 2x]} \log p \times \# ( Z(P_n, p) \setminus \{0\})_k  - \mathcal O(k)  \right|
\leq  C_k \left[ e^{-c_kn}+\frac{\log x}{x} \right]  \]
for $n \geq C_k \log x$. 
By a similar argument, we establish the same bound for the variance of
\[ \frac{1}{x}  \sum_{p \in \mathcal P \cap (x, 2x]} \log p \times \# ( Z(P_n, p) \setminus \{0\})_k~, \]
and then the proof of  Lemma \ref{l:dyson-mix-upgr} is concluded as in Section~\ref{s:upgr}.
\end{proof}

\noindent Bounding the height of $P_n$ similarly to Corollary~\ref{cor:height}  and invoking (\ref{eq:bv}),  we obtain 
\begin{equation}\label{eq:count-dyson} 
\mathbb P \Big\{ \# ( (Z(P_n)\smallsetminus \{0\})_k / \operatorname{Gal}(P_n / \mathbb Q) ) = \mathcal O(k) \Big\} \geq 1 - C \exp(-cn)~.\end{equation}
To deduce that $\operatorname{Gal}(P_n/\mathbb Q)$ contains the subgroup $\u[m] \rtimes A_m$
of $C_2 \wr S_m$, where $m=\lfloor n/2\rfloor$ and $\u[m]$ is as in (\ref{eq:def-Um}), 
we need the following group-theoretic lemma.

\begin{lemma}\label{l:dyson}
Let $\Omega$ be a set of cardinality $2m$, $m \geq 6$. Suppose $G\le C_2\wr S_m \le S_{2m}=\mathrm{Sym}(\Omega)$ preserves a decomposition $\Omega=\Omega_1\cup \cdots \cup \Omega_m$ into blocks of size $2$, and assume that the action of $G$ on the 6-tuples $(\Omega)_6$ has $\mathcal O(6)=76$ orbits. Then $G$ contains $\u[m]\rtimes A_m$.
\end{lemma}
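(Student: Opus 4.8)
The strategy is to exploit the structure of $C_2 \wr S_m$ as $C_2^m \rtimes S_m$ and to separate the "image in $S_m$" from the "kernel part" lying in $C_2^m$. Write $\pi\colon C_2\wr S_m \to S_m$ for the natural projection (the action on the set of blocks $\{\Omega_1,\dots,\Omega_m\}$), and let $N = G \cap C_2^m$ be the kernel of $\pi|_G$. The first step is to count orbits more carefully: from general principles, the number of orbits of $G$ on $(\Omega)_6$ equals $\frac{1}{|G|}\sum_{g\in G} \#\{\text{injective } 6\text{-tuples fixed by } g\}$ (Burnside), but it is cleaner to use the following combinatorial observation already recorded in the paper: $\mathcal O(6) = 76$ is exactly the number of orbits of the \emph{full} group $C_2\wr S_m$ on $(\Omega)_6$ (for $m\ge 6$). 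Since $G\le C_2\wr S_m$, having the same number of orbits forces each $C_2\wr S_m$-orbit on $(\Omega)_6$ to remain a single $G$-orbit. Thus the hypothesis is equivalent to: $G$ acts transitively on each orbit of $C_2\wr S_m$ on $(\Omega)_6$.

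The second step is to extract from this "6-transitivity relative to the wreath product" the two pieces of information we need: (a) $\pi(G)$ is $6$-transitive on $\{1,\dots,m\}$ — taking a $6$-tuple of elements lying in six distinct blocks and projecting; since $m\ge 6$ and a $6$-transitive subgroup of $S_m$ is $A_m$ or $S_m$ (by the classification, as cited after the proof of Theorem~\ref{thm:1}), we get $\pi(G)\ge A_m$; and (b) the kernel $N=G\cap C_2^m$ is large. For (b), the key is to look at tuples that revisit blocks: a $6$-tuple using the blocks $\Omega_{i},\Omega_{i},\Omega_{j},\Omega_{j},\Omega_{\ell},\Omega_{\ell}$ (three distinct blocks, each hit twice). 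The $C_2\wr S_m$-orbit of such a configuration, intersected with the $G$-orbit condition, shows that $G$ contains an element acting trivially on all blocks except possibly permuting within, and flipping a prescribed pattern on $\{\Omega_i,\Omega_j,\Omega_\ell\}$; combining a few such and using that $\pi(G)\ge A_m$ to conjugate the support around, one produces all "sign vectors" in $C_2^m$ with an even number of $-1$'s supported on any three coordinates, hence — since $m\ge 6$, so triples generate the even-weight subgroup — all of $\u[m]$. Here one must be slightly careful: a priori the flips could come coupled with block-internal swaps, but composing two elements with the same $\pi$-image cancels the $S_m$-part and lands in $C_2^m$, so $N$ genuinely contains the claimed sign vectors.

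The third step assembles the pieces: we have shown $N = G\cap C_2^m \supseteq \u[m]$ and $\pi(G)\supseteq A_m$. Since $\u[m]$ is normal in $C_2\wr S_m$ (it is the unique index-$2$ subgroup of $C_2^m$ invariant under $S_m$-permutation, namely $\ker$ of the product map) and $G$ normalizes $N$, the subgroup $\langle \u[m], \tilde G\rangle$, where $\tilde G\le G$ is a set of coset representatives mapping onto $A_m$, contains $\u[m]\rtimes A_m$; and this last group has order $2^{m-1}\cdot \tfrac{m!}{2} = |\u[m]\rtimes A_m|$, so $G\supseteq \u[m]\rtimes A_m$, as required.

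The main obstacle I anticipate is step two, part (b): carefully turning "the $G$-orbit on the repeated-block configurations is as large as the $C_2\wr S_m$-orbit" into concrete elements of $N$, keeping track of the coupling between the $C_2^m$-coordinates and the $S_m$-coordinates, and verifying that triples of coordinates suffice to generate $\u[m]$ when $m\ge 6$ (for small $m$ the even-weight vectors supported on triples might not span, which is presumably why the hypothesis $m\ge 6$ — equivalently the use of $6$-tuples — appears). One should double-check the exact correspondence between $\mathcal O(6)=76$ and the orbit count of the full wreath product on $(\Omega)_6$; this is the quantitative input that makes the whole "relative transitivity" argument go through, and it is worth verifying directly from \eqref{eq:number_orbits} with $k=6$.
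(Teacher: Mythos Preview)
Your overall outline is sound in steps 1, 2(a), and 3 (the last is a bit sketchy, but essentially correct: once $N:=G\cap C_2^m \supseteq \u[m]$ and $\pi(G)\supseteq A_m$, one checks that $G/\u[m]\le C_2\times S_m$ always contains $\{1\}\times A_m$, hence $G\supseteq \u[m]\rtimes A_m$). The real gap is step~2(b).

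From transitivity on the $(C_2\wr S_m)$-orbit of a $6$-tuple of type $(i,i,j,j,\ell,\ell)$, you only learn that the \emph{setwise} stabiliser $G_\Delta$ of $\Delta=\Omega_i\cup\Omega_j\cup\Omega_\ell$ surjects onto $C_2\wr S_3\le\operatorname{Sym}(\Delta)$. The element $g\in G$ realising a prescribed flip pattern on $\Delta$ is utterly uncontrolled on the remaining $m-3$ blocks: its $\pi$-image can be any permutation of $\{1,\dots,m\}\setminus\{i,j,\ell\}$, and its sign vector on those coordinates is arbitrary. So you have \emph{not} produced an element of $N=G\cap C_2^m$. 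Your proposed fix (``compose two elements with the same $\pi$-image'') is circular: two lifts of the same $\pi$-image differ by an element of $N$, so if $N$ is small (say trivial) they have the \emph{same} sign vector everywhere, and the quotient is the identity. In other words, you need a nontrivial element of $N$ to produce one. (Incidentally, the aside about ``$m\ge 6$, so triples generate the even-weight subgroup'' is off: weight-$2$ vectors already generate $\u[m]$ for every $m\ge 2$; the hypothesis $m\ge 6$ is used only to get $6$-transitivity of $\pi(G)$.)

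The paper resolves this differently. From the $A_m$-invariance of $N\le C_2^m$ it deduces the dichotomy $N\le\langle(-1,\dots,-1)\rangle$ or $N\supseteq\u[m]$; the second case is handled much as you do. To rule out the first case it invokes a separate cohomological computation (their Lemma~\ref{l:cohomology_symmetric}): $H^1(S_m,C_2^m)\cong C_2$, $H^1(A_m,C_2^m)=0$, and $H^1(S_m,C_2^m/\langle(-1,\dots,-1)\rangle)=H^1(A_m,C_2^m/\langle(-1,\dots,-1)\rangle)=0$. This pins down $G$ up to conjugacy (there are only a handful of possibilities for a complement), and one then checks directly that in each such $G$ the setwise stabiliser of a pair of blocks fails to induce $C_2\wr S_2$ on their union --- contradicting the orbit hypothesis. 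This cohomological step is precisely the missing ingredient in your plan; without it (or some substitute argument excluding small $N$), the proof does not close.
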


We first compute some group cohomologies. In the following lemma we use additive notation, i.e., we write $G$-modules additively. The lemma is known (see \cite[Exercise 6.3]{aschbacher} for a similar statement) but we give a proof for completeness.

\begin{lemma}
\label{l:cohomology_symmetric}
	For $n\ge 3$, let $S_n$ act on $\mathbb F_2^n$ by permuting coordinates; we identify the subspace $(\mathbb F_2^n)^{S_n}$  of constant vectors with $\mathbb F_2$. Then:
	\begin{itemize}
		\item[(i)] $H^1(S_n, \mathbb F_2^n)\cong \mathbb F_2$ and $H^1(A_n,\mathbb F_2^n)=0$.
		\item[(ii)] $H^1(S_n,\mathbb F_2^n / \mathbb F_2)= H^1(A_n,\mathbb F_2^n / \mathbb F_2)=0$.
	\end{itemize}    
\end{lemma}

\begin{proof} 
	(i) Let $G=A_n$ or $S_n$ and let $H$ be the stabilizer of a point in $G$. We have $\mathbb F_2^n=\mathrm{Ind}_{H}^G(\mathbb F_2)$, where $\mathbb F_2$ is the trivial module, and so by Shapiro's lemma
	\[
	H^1(G,\mathbb F_2^n) = H^1(H,\mathbb F_2) = \mathrm{Hom}(H,\mathbb F_2),
	\]
	which is $\mathbb F_2$ for $G=S_n$, and $0$ for $G=A_n$.
	
	(ii) We first prove the statement for $G=A_n$. Assume first $n$ is odd. Then $\mathbb F_2^n = \mathbb F_2 \oplus \mathbb F_2^\perp$ where $\mathbb F_2^\perp$ is the subspace of $\mathbb F_2$ consisting of vectors whose sum is zero. Therefore $ \mathbb F_2^\perp \cong \mathbb F_2^n / \mathbb F_2$ and by (i) we get
	\[
	0= H^1(A_n, \mathbb F_2^n) \cong H^1(A_n,\mathbb F_2)\oplus H^1(A_n, \mathbb F_2^\perp) \cong  H^1(A_n,\mathbb F_2^n /\mathbb F_2)
	\]
	and so $H^1(A_n,\mathbb F_2^n /\mathbb F_2)=0$, as wanted.
	
	Now assume $n$ is even, and let $\mathbb F_2^\perp$ be as above (now $\mathbb F_2^\perp \supset \mathbb F_2$).  We first prove that 
\[ H^1(A_n, \mathbb F_2^\perp / \mathbb F_2)\cong \mathbb F_2~. \]
Let  $f\colon A_n \to \mathbb F_2^\perp / \mathbb F_2$ be a cocycle, so $f(\sigma\tau) = f(\sigma)+ \sigma f(\tau)$. Consider the stabiliser  $A_{n-1}$ of the point $n\in \{1,\ldots, n\}$. Note that the restriction of $\mathbb F_2^\perp / \mathbb F_2$ to $A_{n-1}$ is isomorphic to $\mathbb F_2^{n-1} / \mathbb F_2$, so by the statement with $n$ odd we have $H^1(A_{n-1},\mathbb F_2^\perp / \mathbb F_2)=0$, and we may assume  $f|_{A_{n-1}}=0$.
	
	Let $\sigma\in {A_{n}}$. Then, for every $\tau\in A_{n-1}$, we have $f(\sigma\tau) = f(\sigma)$, so $f$ is constant on each left coset of $A_{n-1}$ in $A_{n}$. Now let $\sigma\in A_{n}$ be such that $ \sigma^2\in {A_{n-1}}$.
We have $0=f(\sigma^2) = f(\sigma) + \sigma f(\sigma)$, so $f(\sigma) = \sigma f(\sigma)$. For  $\tau \in A_n \setminus A_{n-1}$, we consider all  $\sigma \in \tau A_{n-1}$ such that $\sigma^2 \in A_{n-1}$,  i.e.\ $\sigma (n) = \tau(n)$ and $\sigma(\tau (n)) =n$.  Then
\begin{equation}\label{eq:nzcocycle}f(\tau) = f(\sigma) = \sigma f(\sigma) = \sigma f(\tau)~.\end{equation}
Thus either $f(\tau) = 0$, or $f(\tau) = e_n + e_{\tau(n)} + \mathbb F_2$, where $e_i$ is the $i$-th coordinate vector. If $f$ is a cocycle which is not identically zero, the second possibility has to hold for all $\tau \in A_n$.
It is easy to see that (\ref{eq:nzcocycle}) indeed defines  a cocycle which is not a coboundary (since $n$ is even). Therefore, $H^1(A_n,\mathbb F_2^\perp/\mathbb F_2)\cong \mathbb F_2$, as claimed. 
	
	Now consider the short exact sequence $0\to \mathbb F_2^\perp / \mathbb F_2 \to \mathbb F_2^n / \mathbb F_2 \to \mathbb F_2^n / \mathbb F_2^\perp \to 0$. Part of the long exact sequence in cohomology is

\[
	\cdots \to  \stackbelow{0}{H^0(A_n,\mathbb F_2^n/ \mathbb F_2)} \to \stackbelow{\mathbb F_2}{H^0(A_n, \mathbb F_2^n / \mathbb F_2^\perp)} \to \stackbelow{\mathbb F_2}{H^1(A_n, \mathbb F_2^\perp / \mathbb F_2)} \to H^1(A_n,\mathbb F_2^n/ \mathbb F_2) \to \stackbelow{0}{H^1(A_n, \mathbb F_2^n / \mathbb F_2^\perp)} \to \cdots 
\]
	from which $H^1(A_n,\mathbb F_2^n/ \mathbb F_2) = 0$, as desired. 
	
	It remains to prove the statement for $G=S_n$. It is straightforward to do this by taking a cocycle $f\colon S_n \to \mathbb F_2^n/ \mathbb F_2$, restricting it to $A_n$, and using that $H^1(A_n,\mathbb F_2^n/ \mathbb F_2)=0$.
\end{proof}

\begin{proof}[Proof of Lemma \ref{l:dyson}] 
   By assumption, the number of orbits of $G$ on $(\Omega)_6$ is the same as the number of orbits of $C_2\wr S_m$. In particular, letting $\pi\colon G \to S_m$ be the action on the blocks, $\pi(G)$ is $6$-transitive, and so $A_m \le \pi(G)$. Moreover, if $\Delta$ is the union of $t\le 3$ blocks, then the stabilizer of $\Delta$ in $G$ induces $C_2\wr S_t$ on $\Delta$.

   Let $H=G\cap C_2^m$. We have that $H$ is normalized by $G/H\cong \pi(G)\ge A_m$; hence if $H$ contains a  vector, then it contains all of its even co\"ordinate permutations. This easily implies that  either $H\le (C_2^m)^{A_m}=\langle (-1, \ldots, -1)\rangle$ or $\u[m]\le H$.

   Assume first $\u[m]\le H$. Since $\u[m]$ is normal in $G$, $[\u[m],G] \leq \u[m]$. Moreover, using that $\pi(G) \geq A_m$ and looking at elements of the form $ugug^{-1} = u(gug^{-1})$, where $u\in \u[m]$ has exactly two co\"ordinates equal to $-1$, we see that $[\u[m],G] = \u[m]$. In addition, $[G,G] \le \u[m]\rtimes A_m$ and  $\pi([G,G])= A_m$, so $G\ge [G,G] = \u[m]\rtimes A_m$, which concludes the proof in this case.

Now we rule out the case  $H\le  \langle (-1, \ldots, -1)\rangle$. Assume for now that $\pi(G)=S_m$; the case $\pi(G)=A_m$ is essentially identical, as pointed out at the end of the proof. Let $K\cong S_m$ be the subgroup of $C_2\wr S_m$ permuting the blocks. We will apply Lemma \ref{l:cohomology_symmetric} with multiplicative notation (since in this proof the module $C_2^m$ is written multiplicatively).
   
Assume first $H=1$, so $G$ is a complement of $C_2^m$ in $C_2^m \rtimes S_m$. By Lemma \ref{l:cohomology_symmetric}(i), $H^1(S_m, C_2^m) \cong  C_2$, and in particular, up to conjugation we may assume that either $G=K$ or 
\[ G=\{((-1)^{\mathrm{sign}(\sigma)}, \ldots, (-1)^{\mathrm{sign}(\sigma)})\sigma \mid \sigma \in K\}~.\]
In both cases, the stabilizer of the union of two blocks does not induce  $C_2\wr S_2$, a contradiction.

Now we can turn to the case $H= \langle (-1, \ldots, -1)\rangle$, so  $G/H$ is a complement of $C_2^m/H$ in $C_2^m/H \rtimes S_m$.
By Lemma \ref{l:cohomology_symmetric}(ii), $H^1(S_m,C_2^m/H)=1$, and so up to conjugation we may assume $G=\langle H, K\rangle$. Then again the stabilizer of the union of two blocks does not induce $C_2\wr S_2$, contradiction.

   The case $\pi(G)=A_m$ is identical, using that $H^1(A_m,C_2^m)=H^1(A_m,C_2^m/\langle (-1, \ldots, -1)\rangle)=1$, as stated in Lemma \ref{l:cohomology_symmetric}. This concludes the proof of Lemma~\ref{l:dyson}.
\end{proof}

Now we can conclude the proof of Theorem~\ref{thm:2}. Set $Q_n(\lambda) = P_n(\lambda)$ if $n$ is even, and $Q_n(\lambda) = P_n(\lambda)/\lambda$ if $n$ is odd, and set $m=\lfloor n/2\rfloor$. As we saw (Claim~\ref{cl:d-1}), the Galois group $G$ of $Q_n$ is contained in  $C_2\wr S_m$.

Using the case $k = 1$ of (\ref{eq:count-dyson}) and arguing similarly to the proof of Lemma~\ref{l:mult-upgr} in Section~\ref{s:mult} to rule out proper powers of irreducibles, we show that $Q_n$ is irreducible with probability exponentially close to one. Now invoking the case $k=6$ of  (\ref{eq:count-dyson}) and   Lemma~\ref{l:dyson}, we conclude  that $G$ contains $\u[m]\rtimes A_m$ with probability exponentially close to one.
\end{proof}

\end{document}